%-----------------------------------------------------------------------
% Beginning of tran-l-template.tex
%-----------------------------------------------------------------------
%
%     This is a topmatter template file for TRAN for use with AMS-LaTeX.
%
%     Templates for various common text, math and figure elements are
%     given following the \end{document} line.
%
%%%%%%%%%%%%%%%%%%%%%%%%%%%%%%%%%%%%%%%%%%%%%%%%%%%%%%%%%%%%%%%%%%%%%%%%

%     Remove any commented or uncommented macros you do not use.

%\documentclass{proc-l}
\documentclass{amsart}
%     If you need symbols beyond the basic set, uncomment this command.
\usepackage{amssymb}

%     If your article includes graphics, uncomment this command.
%\usepackage{graphicx}

%     If the article includes commutative diagrams, ...
%\usepackage[cmtip,all]{xy}

%     Update the information and uncomment if AMS is not the copyright
%     holder.
%\copyrightinfo{2009}{American Mathematical Society}

\newtheorem{theorem}{Theorem}[section]
\newtheorem{lemma}[theorem]{Lemma}
\newtheorem{result}[theorem]{Result}
\newtheorem{corollary}[theorem]{Corollary}
\theoremstyle{definition}
\newtheorem{definition}[theorem]{Definition}

\theoremstyle{remark}
\newtheorem{remark}[theorem]{Remark}

\numberwithin{equation}{section}

\begin{document}

% \title[short text for running head]{full title}
\title{Certain Transformations for Hypergeometric series in $p$-adic setting}

%    Only \author and \address are required; other information is
%    optional.  Remove any unused author tags.

%    author one information
% \author[short version for running head]{name for top of paper}
\author{Rupam Barman}
\address{Department of Mathematics, Indian Institute of Technology, Hauz Khas, New Delhi-110016, INDIA}
\curraddr{}
\email{rupam@maths.iitd.ac.in}
\thanks{}

%    author two information
\author{Neelam Saikia}
\address{Department of Mathematics, Indian Institute of Technology, Hauz Khas, New Delhi-110016, INDIA}
\curraddr{}
\email{nlmsaikia1@gmail.com}
\thanks{}

%    \subjclass is required.
\subjclass[2010]{Primary: 11G20, 33E50; Secondary: 33C99, 11S80,
11T24.}
\date{10th March, 2014}
\keywords{Character of finite fields, Gaussian hypergeometric series, Elliptic curves, Trace of Frobenius, Teichm\"{u}ller character,
$p$-adic Gamma function.}

%    Abstract is required.
\begin{abstract} In \cite{mccarthy2}, McCarthy defined a function $_{n}G_{n}[\cdots]$
using the Teichm\"{u}ller character of finite fields and quotients of the $p$-adic gamma function.
This function extends hypergeometric functions over finite fields to the $p$-adic setting.
In this paper, we give certain transformation formulas for the function $_{n}G_{n}[\cdots]$
which are not implied from the analogous hypergeometric functions over finite fields.
\end{abstract}
\maketitle
\section{Introduction and statement of results}
In \cite{greene}, Greene introduced the notion of hypergeometric functions over finite fields or
\emph{Gaussian hypergeometric series}. He established these functions as analogues of classical hypergeometric
functions. Many interesting relations between special values of Gaussian hypergeometric series and the number of
points on certain varieties over finite fields have been obtained. By definition, results involving hypergeometric functions over
finite fields are often restricted to primes in certain congruence classes. For example, the expressions for the trace
of Frobenius map on certain families of elliptic curves given in \cite{BK1, BK2, Fuselier, lennon, lennon2} are restricted to
such congruence classes.
In \cite{mccarthy2}, McCarthy defined a function
$_{n}G_{n}[\cdots]$ which can best be described as an analogue of hypergeometric series in the $p$-adic setting.
He showed how results involving Gaussian hypergeometric series can be extended to a wider class of primes using the function
$_{n}G_{n}[\cdots]$.
\par
Let $p$ be an odd prime, and let $\mathbb{F}_q$ denote the finite field with $q$ elements, where $q=p^r, r\geq 1$.
Let $\phi$ be the quadratic character on $\mathbb{F}_q^{\times}$ extended to all of $\mathbb{F}_q$ by setting $\phi(0):=0$.
Let $\mathbb{Z}_p$ denote the ring of $p$-adic integers.
Let $\Gamma_p(.)$ denote the Morita's $p$-adic gamma function, and let $\omega$ denote the
Teichm\"{u}ller character of $\mathbb{F}_q$. We denote by $\overline{\omega}$ the inverse of $\omega$.
For $x \in \mathbb{Q}$ we let $\lfloor x\rfloor$ denote the greatest integer less than
or equal to $x$ and $\langle x\rangle$ denote the fractional part of $x$, i.e., $x-\lfloor x\rfloor$.
Also, we denote by $\mathbb{Z}^{+}$ and $\mathbb{Z}_{\geq 0}$
the set of positive integers and non negative integers, respectively. The definition of the function $_{n}G_{n}[\cdots]$ is
as follows.
\begin{definition}\cite[Definition 5.1]{mccarthy2} \label{defin1}
Let $q=p^r$, for $p$ an odd prime and $r \in \mathbb{Z}^+$, and let $t \in \mathbb{F}_q$.
For $n \in \mathbb{Z}^+$ and $1\leq i\leq n$, let $a_i$, $b_i$ $\in \mathbb{Q}\cap \mathbb{Z}_p$.
Then the function $_{n}G_{n}[\cdots]$ is defined by
\begin{align}
&_nG_n\left[\begin{array}{cccc}
             a_1, & a_2, & \ldots, & a_n \\
             b_1, & b_2, & \ldots, & b_n
           \end{array}|t
 \right]_q:=\frac{-1}{q-1}\sum_{j=0}^{q-2}(-1)^{jn}~~\overline{\omega}^j(t)\notag\\
&\times \prod_{i=1}^n\prod_{k=0}^{r-1}(-p)^{-\lfloor \langle a_ip^k \rangle-\frac{jp^k}{q-1} \rfloor -\lfloor\langle -b_ip^k \rangle +\frac{jp^k}{q-1}\rfloor}
 \frac{\Gamma_p(\langle (a_i-\frac{j}{q-1})p^k\rangle)}{\Gamma_p(\langle a_ip^k \rangle)}
 \frac{\Gamma_p(\langle (-b_i+\frac{j}{q-1})p^k \rangle)}{\Gamma_p(\langle -b_ip^k \rangle)}.\notag
\end{align}
\end{definition}
The aim of this paper is to explore possible transformation formulas for the function $_{n}G_{n}[\cdots]$.
In \cite{mccarthy2}, McCarthy showed that transformations for hypergeometric functions over finite fields can be
re-written in terms of  $_{n}G_{n}[\cdots]$. However, such transformations will hold for
all $p$ where the original characters existed over $\mathbb{F}_p$, and hence restricted to primes in certain
congruence classes. In the same paper, McCarthy posed an interesting question
about finding transformations for $_{n}G_{n}[\cdots]$ which exist for all but finitely many $p$. In \cite{BS1}, the authors
find the following two transformations for the function $_{n}G_{n}[\cdots]$ which exist for all prime $p > 3$.
\begin{result}\cite[Corollary 1.5]{BS1}\label{cor1}
 Let $q=p^r$, $p>3$ be a prime. Let $a, b \in \mathbb{F}_q^{\times}$ and $-\dfrac{27b^2}{4a^3}\neq 1$. Then
 \begin{align}
&{_2}G_2\left[ \begin{array}{cc}
              \frac{1}{4}, & \frac{3}{4} \\
              \frac{1}{3}, & \frac{2}{3}
            \end{array}|-\dfrac{27b^2}{4a^3}
 \right]_q\notag\\
 &=\left\{
                                  \begin{array}{ll}
                                    \phi(b(k^3+ak+b))\cdot {_2}G_2\left[ \begin{array}{cc}
                                                         \frac{1}{2}, & \frac{1}{2} \\
                                                         \frac{1}{3}, & \frac{2}{3}
                                                       \end{array}|-\dfrac{k^3+ak+b}{4k^3}\right]_q \hbox{if~ $a=-3k^2$;}\\
                                    \phi(-b(3h^2+a))\cdot {_2}G_2\left[ \begin{array}{cc}
              \frac{1}{2}, & \frac{1}{2} \\
              \frac{1}{4}, & \frac{3}{4}
            \end{array}|\dfrac{4(3h^2+a)}{9h^2}
 \right]_q  \hbox{if ~$h^3+ah+b=0$.}
                                  \end{array}
                                \right.\notag
\end{align}
\end{result}
Apart from the transformations which can be implied from the hypergeometric
functions over finite fields, the above two transformations are the only transformations for the function $_{n}G_{n}[\cdots]$ in full
generality to date. In this paper, we prove two more such transformations which are given below.
\begin{theorem}\label{MT1}
 Let $q=p^r$, $p>3$ be a prime. Let $m=-27d(d^3+8)$, $n=27(d^6-20d^3-8)$ $\in \mathbb{F}_q^{\times}$ be such that $d^3\neq 1$, and
 $-\dfrac{27n^2}{4m^3}\neq 1$. Then
\begin{align}
&q\phi(-3d)\cdot {_2}G_2\left[ \begin{array}{cc}
 \frac{1}{2}, & \frac{1}{2} \\
 \frac{1}{6}, & \frac{5}{6}
 \end{array}|\dfrac{1}{d^3}\right]_q\notag\\
&=\alpha-q+\phi(-3(8+92d^3+35d^6))+q\phi(n)\cdot{_2}G_2\left[ \begin{array}{cc}
              \frac{1}{4}, & \frac{3}{4} \\
              \frac{1}{3}, & \frac{2}{3}
            \end{array}|-\dfrac{27n^2}{4m^3}
 \right]_q,\notag
 \end{align}
 where $\alpha=\left\{
           \begin{array}{ll}
             5-6\phi(-3), & \hbox{if~ $q\equiv 1\pmod{3}$;} \\
             1, & \hbox{if~ $q\not\equiv 1\pmod{3}$.}
           \end{array}
         \right.$
 \end{theorem}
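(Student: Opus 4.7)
The plan is to realise each side of the identity as (essentially) the trace of Frobenius on a specific elliptic curve over $\mathbb{F}_q$, and then to verify that the two curves obtained this way are isomorphic over $\mathbb{F}_q$ up to an accounted-for quadratic twist. The correction term $\alpha - q + \phi(-3(8+92d^3+35d^6))$ will absorb the discrepancy between the raw $_2G_2$ expression and the clean trace of Frobenius, together with the usual boundary contributions that arise when a Gaussian-style character sum is evaluated over all of $\mathbb{F}_q$ rather than only over the smooth affine locus of the curve.

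First, I would rewrite the left-hand side. The parameter pair $\{\tfrac{1}{6}, \tfrac{5}{6}\}$ typically appears in the $_2G_2$ expression for the trace of Frobenius on a family of curves of the form $y^2 = x^3 + a$ twisted by a parameter proportional to $d^{-3}$; using the $p$-adic analogue of the Greene--Fuselier--Lennon trace formula (of the same type used in the proof of Result~\ref{cor1}) I would show
\begin{align}
q\phi(-3d)\cdot {_2}G_2\!\left[\begin{array}{cc} \tfrac{1}{2}, & \tfrac{1}{2} \\ \tfrac{1}{6}, & \tfrac{5}{6}\end{array}\Big|\tfrac{1}{d^3}\right]_q = -a_q(E_1(d)) + c_1(d,q),\notag
\end{align}
where $E_1(d)$ is an explicit Weierstrass curve attached to $d$ and $c_1(d,q)$ is a small correction (a character value plus a $q$-independent constant, possibly with a case split when $q \equiv 1 \pmod{3}$, the case in which the relevant cubic characters become nontrivial). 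Next, using the standard identification for $_2G_2[\tfrac14,\tfrac34;\tfrac13,\tfrac23;\cdot]$ in terms of the Weierstrass model $y^2 = x^3 + mx + n$ (whose Hauptmodul $1 - 1728/j = -27n^2/(4m^3)$ is exactly the argument appearing on the right-hand side), I would similarly write
\begin{align}
q\phi(n)\cdot {_2}G_2\!\left[\begin{array}{cc} \tfrac{1}{4}, & \tfrac{3}{4} \\ \tfrac{1}{3}, & \tfrac{2}{3}\end{array}\Big|-\tfrac{27n^2}{4m^3}\right]_q = -a_q(E_2) + c_2(q),\notag
\end{align}
with $E_2 : y^2 = x^3 + mx + n$ and $c_2(q)$ another (even simpler) constant correction.

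The identity will then be reduced to the two claims (i) $E_1(d)$ and $E_2$ are isomorphic over $\mathbb{F}_q$, so that $a_q(E_1(d)) = a_q(E_2)$, and (ii) $c_1(d,q) - c_2(q) = \alpha - q + \phi\bigl(-3(8+92d^3+35d^6)\bigr)$. For (i) I would compute the $j$-invariant of $E_2$ from $j(E_2) = -1728\cdot 4m^3/(4m^3+27n^2)$ using $m = -27d(d^3+8)$ and $n = 27(d^6-20d^3-8)$, simplify, and check that the result matches the $j$-invariant of $E_1(d)$; the twist factor will be captured by the ratio $\phi(-3d)/\phi(n)$, which one verifies is a square in $\mathbb{F}_q^{\times}$. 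The hypotheses $d^3\neq 1$ and $-27n^2/(4m^3)\neq 1$ precisely guarantee that both curves are nonsingular.

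The main obstacle I expect is step (ii): keeping careful track of all boundary contributions in the two trace-of-Frobenius formulas. The appearance of $35d^6+92d^3+8$ inside $\phi$ suggests that the discriminant (or some closely related resultant) of the transformation between $E_1(d)$ and $E_2$ is itself $-3(8+92d^3+35d^6)$ up to squares, so obtaining this exact expression will require a careful algebraic manipulation rather than just an isomorphism check. The case split defining $\alpha$ should drop out cleanly once one handles separately the sum over $j$ with $(q-1)\mid 3j$ in Definition~\ref{defin1}, which contributes nontrivially exactly when $3\mid q-1$.
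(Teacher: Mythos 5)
Your overall architecture --- realize each ${}_2G_2$ as (a shift of) a point count on an elliptic curve over $\mathbb{F}_q$ and then match the two curves --- is exactly the paper's strategy, and your treatment of the right-hand side is fine: Theorem \ref{mc} gives $q\phi(n)\cdot{}_2G_2[\frac14,\frac34;\frac13,\frac23\mid -27n^2/(4m^3)]_q=a_q(E)$ for $E: y^2=x^3+mx+n$, with no correction term at all. The genuine gap is on the left-hand side. There is no pre-existing trace formula, Gaussian or $p$-adic, for the lower-parameter pair $\{\frac16,\frac56\}$, and the family it belongs to is not $y^2=x^3+a$: those curves have $j=0$ (excluded by the very hypotheses you invoke elsewhere), their traces are governed by sextic characters of a single coefficient rather than by a genuinely one-parameter hypergeometric argument, and no analogue of the Greene--Fuselier--Lennon formulas applies to them here. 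What is actually true --- and what constitutes essentially all of the work in the paper (Theorem \ref{hessian2}) --- is that $1/d^3$ is the natural argument attached to the Hessian cubic $C_d: x^3+y^3+1=3dxy$, for which one must prove from scratch that
\begin{align}
\#C_d(\mathbb{F}_q)=\alpha-1+q-q\phi(-3d)\cdot{}_2G_2\left[\begin{array}{cc}\frac12, & \frac12\\ \frac16, & \frac56\end{array}\Big|\frac{1}{d^3}\right]_q.\notag
\end{align}
That proof requires writing $q\cdot\#C_d(\mathbb{F}_q)$ as a fourfold Gauss-sum expansion, applying the Davenport--Hasse relation for $m=2,3,6$, converting to $p$-adic gamma quotients via Gross--Koblitz, and verifying a floor-function identity (Lemma \ref{lemma5}) to recognize the result as the stated ${}_2G_2$. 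None of this is supplied, and your plan points the search for it at the wrong curve.

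Once the Hessian connection is in place, your step (i) becomes vacuous rather than a computation: the Weierstrass form of $C_d$ is precisely $y^2=x^3+mx+n$ with the given $m,n$, so there is only one curve and no twist to track. Your step (ii) is also mislocated: the term $\phi(-3(8+92d^3+35d^6))$ does not arise from boundary terms of character sums but from the Barman--Kalita comparison $\#E(\mathbb{F}_q)+q=\#C_d(\mathbb{F}_q)+2+\phi(-3(8+92d^3+35d^6))$, which accounts for points gained and lost under the explicit birational map between the Weierstrass and Hessian models. Your instinct about the case split defining $\alpha$ (cubic characters becoming trivial or not according as $3\mid q-1$) is correct in spirit, but it occurs inside the Gauss-sum evaluation of $\#C_d(\mathbb{F}_q)$, not inside Definition \ref{defin1}. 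In short: right skeleton, but the load-bearing step is absent and the curve you propose for the left-hand side is the wrong one.
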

Combining Result \ref{cor1} and Theorem \ref{MT1}, we have another four such transformations for the function
$_{n}G_{n}[\cdots]$ which are listed below.
\begin{corollary}\label{cor2}
 Let $q=p^r$, $p>3$ be a prime.
 Let $\alpha$ be defined as in Theorem \ref{MT1}, and $m=-27d(d^3+8)$, $n=27(d^6-20d^3-8)\in \mathbb{F}_q^{\times}$ be such that $d^3\neq 1$ and
 $-\dfrac{27n^2}{4m^3}\neq 1$.
 \begin{enumerate}
  \item
If $3k^2+m=0$, then
\begin{align}
&q\phi(-3d)\cdot {_2}G_2\left[ \begin{array}{cc}
\frac{1}{2}, & \frac{1}{2} \\
\frac{1}{6}, & \frac{5}{6}
\end{array}|\dfrac{1}{d^3}\right]_q\notag\\
&=\alpha-q+\phi(-3(8+92d^3+35d^6))+q\phi(k^3+mk+n)\notag\\
&~\times{_2}G_2\left[ \begin{array}{cc}
              \frac{1}{2}, & \frac{1}{2} \\
              \frac{1}{3}, & \frac{2}{3}
            \end{array}|-\dfrac{k^3+mk+n}{4k^3}
 \right]_q.\notag
\end{align}
\item If $h^3+mh+n=0$, then
\begin{align}
&q\phi(-3d)\cdot {_2}G_2\left[ \begin{array}{cc}
\frac{1}{2}, & \frac{1}{2} \\
\frac{1}{6}, & \frac{5}{6}
\end{array}|\dfrac{1}{d^3}\right]_q\notag\\
&=\alpha-q+\phi(-3(8+92d^3+35d^6))+q\phi(-3h^2-m)\notag\\
&~\times{_2}G_2\left[ \begin{array}{cc}
              \frac{1}{2}, & \frac{1}{2} \\
              \frac{1}{4}, & \frac{3}{4}
            \end{array}|\dfrac{4(3h^2+m)}{9h^2}
 \right]_q.\notag
\end{align}
\end{enumerate}
\end{corollary}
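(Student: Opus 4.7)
The plan is a direct substitution argument combining Theorem \ref{MT1} with Result \ref{cor1}. Theorem \ref{MT1} expresses the quantity $q\phi(-3d)\cdot {_2}G_2[\tfrac{1}{2},\tfrac{1}{2};\tfrac{1}{6},\tfrac{5}{6}|\tfrac{1}{d^3}]_q$ as an explicit error term plus $q\phi(n)\cdot {_2}G_2[\tfrac{1}{4},\tfrac{3}{4};\tfrac{1}{3},\tfrac{2}{3}|-\tfrac{27n^2}{4m^3}]_q$, while Result \ref{cor1}, applied with $a=m$ and $b=n$, rewrites the remaining ${_2}G_2$ in two different ways according as $m=-3k^2$ or $h^3+mh+n=0$. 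Substituting into Theorem \ref{MT1} and simplifying the product of quadratic characters will yield the two claims. The hypotheses $m,n\in\mathbb{F}_q^{\times}$ and $-27n^2/(4m^3)\neq 1$ needed for Result \ref{cor1} are exactly those assumed in the corollary, so the substitution is legitimate.

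For part (1) (the case $3k^2+m=0$, i.e. $m=-3k^2$), Result \ref{cor1} gives
\[
{_2}G_2\!\left[\begin{array}{cc} \tfrac{1}{4}, & \tfrac{3}{4} \\ \tfrac{1}{3}, & \tfrac{2}{3} \end{array}\Big|-\tfrac{27n^2}{4m^3}\right]_q = \phi\bigl(n(k^3+mk+n)\bigr)\cdot {_2}G_2\!\left[\begin{array}{cc} \tfrac{1}{2}, & \tfrac{1}{2} \\ \tfrac{1}{3}, & \tfrac{2}{3} \end{array}\Big|-\tfrac{k^3+mk+n}{4k^3}\right]_q.
\]
Inserting this into Theorem \ref{MT1}, the prefactor becomes $q\phi(n)\cdot\phi(n(k^3+mk+n))=q\phi(n^2)\phi(k^3+mk+n)=q\phi(k^3+mk+n)$, which is the stated coefficient.

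For part (2) (the case $h^3+mh+n=0$), the second branch of Result \ref{cor1} yields
\[
{_2}G_2\!\left[\begin{array}{cc} \tfrac{1}{4}, & \tfrac{3}{4} \\ \tfrac{1}{3}, & \tfrac{2}{3} \end{array}\Big|-\tfrac{27n^2}{4m^3}\right]_q = \phi\bigl(-n(3h^2+m)\bigr)\cdot {_2}G_2\!\left[\begin{array}{cc} \tfrac{1}{2}, & \tfrac{1}{2} \\ \tfrac{1}{4}, & \tfrac{3}{4} \end{array}\Big|\tfrac{4(3h^2+m)}{9h^2}\right]_q,
\]
and $q\phi(n)\cdot\phi(-n(3h^2+m))=q\phi(-n^2(3h^2+m))=q\phi(-(3h^2+m))=q\phi(-3h^2-m)$, matching the stated formula.

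There is no substantive obstacle in this argument: all the analytic work has already been done in proving Theorem \ref{MT1} and Result \ref{cor1}, and what remains is pure character bookkeeping. The only algebraic simplification invoked is $\phi(n^2)=1$, which is valid because $n\in\mathbb{F}_q^{\times}$. Accordingly I expect the proof in the paper to occupy only a few lines.
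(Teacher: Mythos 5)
Your proposal is correct and is exactly the argument the paper intends: the corollary is stated as an immediate consequence of "combining Result \ref{cor1} and Theorem \ref{MT1}," with no further proof given, and your substitution $a=m$, $b=n$ together with the simplification $\phi(n^2)=1$ (valid since $n\in\mathbb{F}_q^{\times}$) is precisely the required bookkeeping. Both branches of the character computation check out, so there is nothing to add.
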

For an elliptic curve $E$ defined over $\mathbb{F}_q$,
the trace of Frobenius of $E$ is defined as $a_q(E):=q+1-\#E(\mathbb{F}_q)$, where $\#E(\mathbb{F}_q)$ denotes the number of $\mathbb{F}_q$-
points on $E$ including the point at infinity. Also, $j(E)$ denotes the $j$-invariant of $E$.
We now state a result of McCarthy which will be used to prove our main results.
\begin{theorem}\cite[Theorem 1.2]{mccarthy2}\label{mc}
Let $p>3$ be a prime. Consider an elliptic curve $E_s/\mathbb{F}_p$ of the form $E_s: y^2=x^3+ax+b$ with $j(E_s)\neq 0, 1728$. Then
\begin{align}
 a_p(E_s)=\phi(b)\cdot p\cdot {_2}G_2\left[ \begin{array}{cc}
              \frac{1}{4}, & \frac{3}{4} \\
              \frac{1}{3}, & \frac{2}{3}
            \end{array}|-\frac{27b^2}{4a^3}
 \right]_p.
\end{align}
\end{theorem}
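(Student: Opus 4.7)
My plan is to translate the classical point-count formula for $E_s$ over $\mathbb{F}_p$ into a sum of quotients of $p$-adic gamma functions via the Gross--Koblitz formula, and then recognise the outcome as the claimed value of $_2G_2$. Starting from $\#E_s(\mathbb{F}_p)=p+1+\sum_{x\in\mathbb{F}_p}\phi(x^3+ax+b)$ one gets $a_p(E_s)=-\sum_{x}\phi(x^3+ax+b)$. Under the hypothesis $j(E_s)\neq 0,1728$ both $a$ and $b$ are non-zero, so after factoring out $\phi(b)$ and a linear change of variables the sum depends (up to the quadratic twist $\phi(b)$) only on the quantity $-27b^2/(4a^3)$, which is a multiple of $1/j(E_s)$.

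Next I would expand the quadratic character $\phi=\omega^{(p-1)/2}$ using the multiplicative-character binomial expansion (the analogue of $(1+y)^\alpha=\sum_m\binom{\alpha}{m}y^m$), writing each coefficient as a Jacobi sum and decomposing those Jacobi sums into ratios of Gauss sums $g(\omega^{-j})$. Orthogonality collapses the inner sums and the point count becomes a single summation over $j=0,1,\dots,p-2$ of a product of four Gauss sums, weighted by $\overline{\omega}^{j}\!\bigl(-27b^2/(4a^3)\bigr)$.

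The crucial input is now the Gross--Koblitz formula $g(\omega^{-j})=-\pi_p^{\,j}\,\Gamma_p(\langle j/(p-1)\rangle)$, where $\pi_p^{\,p-1}=-p$. Substituting converts every Gauss sum into a $p$-adic gamma value together with a power of $-p$. Reducing the shifted arguments $\langle \tfrac14-\tfrac{j}{p-1}\rangle$, $\langle \tfrac34-\tfrac{j}{p-1}\rangle$, $\langle -\tfrac13+\tfrac{j}{p-1}\rangle$, $\langle -\tfrac23+\tfrac{j}{p-1}\rangle$ modulo $1$ produces precisely the $\lfloor\cdot\rfloor$-exponents of $-p$ appearing in Definition \ref{defin1}, so the sum matches the $r=1$ case of $_2G_2$ with parameters $(1/4,3/4;\,1/3,2/3)$ at argument $-27b^2/(4a^3)$. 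The outer factor of $p$ arises from absorbing the $-1/(p-1)$ normalisation together with an uncancelled power of $-p$ from the Gross--Koblitz substitutions.

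The main obstacle is that the parallel result of Fuselier and Lennon, phrased in terms of Greene's finite-field hypergeometric functions, requires $p\equiv 1\pmod{12}$ so that multiplicative characters of orders $3$, $4$, and $6$ exist on $\mathbb{F}_p^{\times}$. In the $p$-adic setting only the Teichm\"uller character is available, so the Jacobi-sum manipulations used over $\mathbb{F}_p$ must be replaced by intrinsically $p$-adic identities among $\Gamma_p$-values --- chiefly the $p$-adic reflection formula and the Gross--Koblitz multiplication relation. Verifying that the resulting gamma quotients and $(-p)^{-\lfloor\cdot\rfloor}$ factors assemble into the pattern of Definition \ref{defin1} for every residue class of $p$ modulo $12$ is the combinatorial heart of the argument, and is what allows the hypothesis to be relaxed to $p>3$.
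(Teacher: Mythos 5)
A preliminary point: this paper never proves Theorem \ref{mc} at all --- it is imported verbatim from \cite[Theorem 1.2]{mccarthy2}, with only a remark that the authors checked its extension to $\mathbb{F}_q$ --- so the fair benchmarks for your proposal are McCarthy's original argument and the structurally identical proof this paper gives for Theorem \ref{hessian2}. Measured against those, your outline has the right architecture (character-sum point count $\to$ Gauss sums $\to$ Gross--Koblitz $\to$ congruence-free $p$-adic gamma identities $\to$ match with Definition \ref{defin1}), but it deviates in two places, and the comparison is instructive. First, your opening move --- expanding $\phi(x^3+ax+b)$ through Greene's character-binomial theorem and Jacobi sums --- is not what McCarthy or this paper do: they use the additive-character method instead, inserting $\sum_z\theta(zP(x,y))$, expanding each $\theta$ via Fuselier's lemma (Lemma \ref{lemma1}), and letting orthogonality (Lemma \ref{lemma2}) collapse everything to a single $l$-sum of \emph{quotients} of Gauss sums; the Davenport--Hasse relation is then needed only where the relevant characters exist for every $p>3$, or inside congruence-restricted cases that merely produce constant terms. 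The binomial/Jacobi route is workable in principle, but it is precisely the finite-field hypergeometric formalism whose restrictions (the $p\equiv1\pmod{12}$ of Fuselier and Lennon) you are trying to shed, so it would reintroduce the problem at the very step where you hope to avoid it. Second, and more importantly, the device that actually relaxes the hypothesis to $p>3$ is not a residue-class-by-residue-class verification modulo $12$, nor the reflection formula, but a single uniform identity: the $p$-adic multiplication formula $\omega(t^{tj})\,\Gamma_p\bigl(\langle\tfrac{tj}{q-1}\rangle\bigr)\prod_{h=1}^{t-1}\Gamma_p\bigl(\langle\tfrac{h}{t}\rangle\bigr)=\prod_{h=0}^{t-1}\Gamma_p\bigl(\langle\tfrac{h}{t}+\tfrac{j}{q-1}\rangle\bigr)$, valid for all $p\nmid t$ (McCarthy's Lemma 4.1, generalized as Lemma \ref{lemma4} here), applied with $t=2,3,4$; it is this identity that turns $\Gamma_p$ of multiples of $\tfrac{j}{q-1}$ into the shifted arguments $\langle a_i-\tfrac{j}{q-1}\rangle$, $\langle -b_i+\tfrac{j}{q-1}\rangle$ with parameters $\tfrac14,\tfrac34;\tfrac13,\tfrac23$, while the $(-p)$-exponent bookkeeping is settled by one floor-function lemma in the style of Lemma \ref{lemma5}, whose case analysis runs over the residue of the summation index $l$, not over residue classes of $p$. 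Your phrase ``Gross--Koblitz multiplication relation'' can reasonably be read as naming this lemma, so I would call your proposal correct in outline; note also that your remark that ``only the Teichm\"{u}ller character is available'' is slightly off --- every character of $\mathbb{F}_p^{\times}$ is a power of $\omega$; what fails for general $p$ is the existence of characters of exact order $3$ or $4$, and the multiplication formula is exactly the substitute for them.
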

\begin{remark} McCarthy proved Theorem \ref{mc} over $\mathbb{F}_p$
and remarked that the result could be generalized for $\mathbb{F}_q$.
We have verified that Theorem \ref{mc} is also true for $\mathbb{F}_q$.
We will apply Theorem \ref{mc} for $\mathbb{F}_q$ to prove our results.
\end{remark}

\section{Preliminaries}
Let $\widehat{\mathbb{F}_q^\times}$ denote the set of all multiplicative characters $\chi$ on $\mathbb{F}_q^{\times}$.
It is known that $\widehat{\mathbb{F}_q^\times}$ is a cyclic group of order $q-1$
under the multiplication of characters: $(\chi\psi)(x)=\chi(x)\psi(x)$, $x\in \mathbb{F}_q^{\times}$.
The domain of each
$\chi \in \mathbb{F}_q^{\times}$ is extended to $\mathbb{F}_q$ by setting $\chi(0):=0$ including the trivial character $\varepsilon$.
We now state the \emph{orthogonality relations} for multiplicative characters in the following lemma.
\begin{lemma}\emph{(\cite[Chapter 8]{ireland}).}\label{lemma2} We have
\begin{enumerate}
\item $\displaystyle\sum_{x\in\mathbb{F}_q}\chi(x)=\left\{
                                  \begin{array}{ll}
                                    q-1 & \hbox{if~ $\chi=\varepsilon$;} \\
                                    0 & \hbox{if ~~$\chi\neq\varepsilon$.}
                                  \end{array}
                                \right.$
\item $\displaystyle\sum_{\chi\in \widehat{\mathbb{F}_q^\times}}\chi(x)~~=\left\{
                            \begin{array}{ll}
                              q-1 & \hbox{if~~ $x=1$;} \\
                              0 & \hbox{if ~~$x\neq1$.}
                            \end{array}
                          \right.$
\end{enumerate}
\end{lemma}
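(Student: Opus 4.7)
The plan is to prove both identities by the same translation-invariance trick, exploiting that both $\mathbb{F}_q^\times$ and $\widehat{\mathbb{F}_q^\times}$ are (cyclic) groups of order $q-1$, as noted in the paragraph preceding the lemma.

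For part (1), I would first dispose of the case $\chi=\varepsilon$: the convention $\varepsilon(0):=0$ and $\varepsilon(x)=1$ for $x\in\mathbb{F}_q^\times$ collapses the sum to $\#\mathbb{F}_q^\times=q-1$. For $\chi\neq\varepsilon$, I would pick some $y\in\mathbb{F}_q^\times$ with $\chi(y)\neq 1$; setting $S:=\sum_{x\in\mathbb{F}_q}\chi(x)=\sum_{x\in\mathbb{F}_q^\times}\chi(x)$ and using that $x\mapsto yx$ is a bijection of $\mathbb{F}_q^\times$, one gets
\[
\chi(y)\,S=\sum_{x\in\mathbb{F}_q^\times}\chi(yx)=\sum_{z\in\mathbb{F}_q^\times}\chi(z)=S.
\]
Hence $(\chi(y)-1)S=0$, forcing $S=0$.

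For part (2), the argument is dual. If $x=1$ every character contributes $1$ and the sum equals $|\widehat{\mathbb{F}_q^\times}|=q-1$; if $x=0$ every character vanishes by the extension convention, so the sum is $0$. For $x\in\mathbb{F}_q^\times$ with $x\neq 1$, set $T:=\sum_{\chi}\chi(x)$; I would produce a character $\psi$ with $\psi(x)\neq 1$ and compute
\[
\psi(x)\,T=\sum_\chi(\psi\chi)(x)=\sum_{\chi'}\chi'(x)=T,
\]
using that $\chi\mapsto\psi\chi$ is a bijection of $\widehat{\mathbb{F}_q^\times}$; then $T=0$ as before.

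The one nontrivial input is the existence of $\psi$ with $\psi(x)\neq 1$ whenever $x\in\mathbb{F}_q^\times\setminus\{1\}$, and this is the only place where I would invoke the cyclic structure of $\widehat{\mathbb{F}_q^\times}$: fixing a generator $g$ of $\mathbb{F}_q^\times$, a primitive $(q-1)$-th root of unity $\zeta\in\mathbb{C}^\times$, and defining $\psi$ by $\psi(g^k):=\zeta^k$ yields a character with $\psi(g^k)=1$ iff $(q-1)\mid k$, i.e., iff $g^k=1$. This is the main potential obstacle, but it is essentially immediate from the cyclicity already stated in the preamble; so the entire lemma reduces to the two short symmetry arguments above.
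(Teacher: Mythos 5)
Your proof is correct, and it is essentially the canonical argument: the paper itself gives no proof of this lemma, citing Ireland--Rosen, and the proof there is exactly your translation trick (multiplying the sum by $\chi(y)$ with $\chi(y)\neq 1$, resp.\ by $\psi(x)$ with $\psi(x)\neq 1$, and using the bijections $x\mapsto yx$ and $\chi\mapsto\psi\chi$), with the cyclicity of $\mathbb{F}_q^\times$ supplying the required character $\psi$ just as you construct it. Your explicit treatment of the conventions $\chi(0):=0$ and $\varepsilon(0):=0$ matches the setup stated before the lemma, so nothing is missing.
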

\par Let $\mathbb{Z}_p$ and $\mathbb{Q}_p$ denote the ring of $p$-adic integers and the field of $p$-adic numbers, respectively.
Let $\overline{\mathbb{Q}_p}$ be the algebraic closure of $\mathbb{Q}_p$ and $\mathbb{C}_p$ the completion of $\overline{\mathbb{Q}_p}$.
Let $\mathbb{Z}_q$ be the ring of integers in the unique unramified extension of $\mathbb{Q}_p$ with residue field $\mathbb{F}_q$.
We know that $\chi\in \widehat{\mathbb{F}_q^{\times}}$ takes values in $\mu_{q-1}$, where $\mu_{q-1}$ is the group of
$(q-1)$-th root of unity in $\mathbb{C}^{\times}$. Since $\mathbb{Z}_q^{\times}$ contains all $(q-1)$-th root of unity,
we can consider multiplicative characters on $\mathbb{F}_q^\times$
to be maps $\chi: \mathbb{F}_q^{\times} \rightarrow \mathbb{Z}_q^{\times}$.
\par We now introduce some properties of Gauss sums. For further details, see \cite{evans}. Let $\zeta_p$ be a fixed primitive $p$-th root of unity
in $\overline{\mathbb{Q}_p}$. The trace map $\text{tr}: \mathbb{F}_q \rightarrow \mathbb{F}_p$ is given by
\begin{align}
\text{tr}(\alpha)=\alpha + \alpha^p + \alpha^{p^2}+ \cdots + \alpha^{p^{r-1}}.\notag
\end{align}
Then the additive character
$\theta: \mathbb{F}_q \rightarrow \mathbb{Q}_p(\zeta_p)$ is defined by
\begin{align}
\theta(\alpha)=\zeta_p^{\text{tr}(\alpha)}.\notag
\end{align}
For $\chi \in \widehat{\mathbb{F}_q^\times}$, the \emph{Gauss sum} is defined by
\begin{align}
G(\chi):=\sum_{x\in \mathbb{F}_q}\chi(x)\theta(x).\notag
\end{align}
We let $T$ denote a fixed generator of $\widehat{\mathbb{F}_q^\times}$ and denote by $G_m$ the Gauss sum $G(T^m)$.
We now state three results on Gauss sums which will be used to prove our main results.
\begin{lemma}\emph{(\cite[Eqn. 1.12]{greene}).}\label{fusi3}
If $k\in\mathbb{Z}$ and $T^k\neq\varepsilon$, then
$$G_kG_{-k}=qT^k(-1).$$
\end{lemma}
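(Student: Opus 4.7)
The plan is to expand the product $G_kG_{-k}$ directly as a double sum via the definition of the Gauss sum, collapse the resulting sum with a change of variables, and then invoke the orthogonality relations for both the additive character $\theta$ and the multiplicative character $T^k$. Setting $\chi=T^k$, which is nontrivial by hypothesis, the first move is to write $G_kG_{-k}=\sum_{x,y\in\mathbb{F}_q^\times}\chi(x)\chi^{-1}(y)\,\theta(x+y)$.

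The next step is the substitution $x=uy$ with $u\in\mathbb{F}_q^\times$. Because $\chi$ is multiplicative, this collapses the character factor $\chi(x)\chi^{-1}(y)$ into $\chi(u)$, while the argument of $\theta$ becomes $y(u+1)$. The double sum thus reads $\sum_{u\in\mathbb{F}_q^\times}\chi(u)\sum_{y\in\mathbb{F}_q^\times}\theta(y(u+1))$. The inner sum depends on whether $u=-1$: it equals $q-1$ in that case (each summand being $\theta(0)=1$), while for $u\neq -1$ the change of variable $z=y(u+1)$ reduces it to $\sum_{z\in\mathbb{F}_q^\times}\theta(z)=-1$, since $\theta$ is a nontrivial additive character on all of $\mathbb{F}_q$.

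Combining these two contributions yields $G_kG_{-k}=(q-1)\chi(-1)-\sum_{u\in\mathbb{F}_q^\times,\,u\neq -1}\chi(u)$. To finish, I would apply Lemma \ref{lemma2}(1), which gives $\sum_{u\in\mathbb{F}_q^\times}\chi(u)=0$ since $\chi\neq\varepsilon$, and hence $\sum_{u\neq -1}\chi(u)=-\chi(-1)$. Plugging this back produces $G_kG_{-k}=(q-1)\chi(-1)+\chi(-1)=q\chi(-1)=qT^k(-1)$, which is the claimed identity. I do not anticipate any serious obstacle here; the only step requiring care is the isolation of the $u=-1$ term, whose $\chi(-1)$ factor is exactly what produces the sign appearing in the final formula.
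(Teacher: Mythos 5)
Your argument is correct and complete: the expansion of $G_kG_{-k}$ as a double sum, the substitution $x=uy$, the separation of the $u=-1$ term, and the two orthogonality facts (for the nontrivial additive character $\theta$ and for $\chi=T^k\neq\varepsilon$ via Lemma \ref{lemma2}(1)) combine exactly as you describe to give $q\,T^k(-1)$. The paper itself gives no proof — it cites this as Equation 1.12 of Greene — and what you have written is precisely the standard derivation found in that source, so there is nothing to reconcile.
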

\begin{lemma}\emph{(\cite[Lemma 2.2]{Fuselier}).}\label{lemma1}
For all $\alpha \in \mathbb{F}_q^{\times}$, $$\theta(\alpha)=\frac{1}{q-1}\sum_{m=0}^{q-2}G_{-m}T^m(\alpha).$$
\end{lemma}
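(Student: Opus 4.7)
The final statement (Lemma \ref{lemma1}) is the multiplicative Fourier inversion expressing the additive character $\theta(\alpha)$ as a weighted sum of characters $T^m(\alpha)$, with weights given by the Gauss sums $G_{-m}$. My plan is to prove it by starting from the right-hand side, substituting the definition of the Gauss sum, swapping the order of summation, and collapsing the inner character sum via the orthogonality relation in Lemma \ref{lemma2}(2).

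Concretely, I would first write
\begin{equation*}
\sum_{m=0}^{q-2} G_{-m}\, T^m(\alpha) = \sum_{m=0}^{q-2} T^m(\alpha) \sum_{x\in \mathbb{F}_q} T^{-m}(x)\, \theta(x).
\end{equation*}
Since each $T^{-m}$ vanishes at $0$ (including the trivial character $\varepsilon$ by the convention $\varepsilon(0):=0$ recalled in the paragraph preceding Lemma \ref{lemma2}), the inner sum is effectively over $x\in \mathbb{F}_q^{\times}$. I would then interchange the two summations to obtain
\begin{equation*}
\sum_{x\in \mathbb{F}_q^{\times}} \theta(x) \sum_{m=0}^{q-2} T^m(\alpha)\, T^{-m}(x) = \sum_{x\in \mathbb{F}_q^{\times}} \theta(x) \sum_{m=0}^{q-2} T^m(\alpha x^{-1}),
\end{equation*}
using that $\alpha\in \mathbb{F}_q^{\times}$ so $\alpha x^{-1}$ is well defined.

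Next, I would observe that as $m$ ranges over $0,1,\dots,q-2$, the characters $T^m$ run through all of $\widehat{\mathbb{F}_q^\times}$ exactly once, since $T$ is a fixed generator of this cyclic group of order $q-1$. Thus by Lemma \ref{lemma2}(2), the inner sum equals $q-1$ when $\alpha x^{-1}=1$ (i.e.\ $x=\alpha$) and $0$ otherwise. Only the term $x=\alpha$ survives, giving $(q-1)\theta(\alpha)$, and dividing by $q-1$ yields the claimed identity.

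There is essentially no serious obstacle: the only mild point is ensuring the boundary case $m=0$ (where $T^{-m}=\varepsilon$) is handled consistently, which is taken care of by the convention $\varepsilon(0):=0$ already in force in the paper. The proof is a one-line application of multiplicative orthogonality after unfolding the Gauss sum; Lemma \ref{fusi3} is not needed here.
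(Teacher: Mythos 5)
Your proof is correct: unfolding $G_{-m}=G(T^{-m})$, swapping the sums, and collapsing $\sum_{m=0}^{q-2}T^m(\alpha x^{-1})$ by the orthogonality relation of Lemma \ref{lemma2}(2) is exactly the standard argument, and the convention $\varepsilon(0):=0$ does handle the $m=0$ term as you note. The paper itself offers no proof but cites Fuselier's Lemma 2.2, whose proof is this same multiplicative Fourier-inversion computation, so your route coincides with the source's.
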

\begin{theorem}\emph{(\cite[Davenport-Hasse Relation]{Lang}).}\label{lemma3}
Let $m$ be a positive integer and let $q=p^r$ be a prime power such that $q\equiv 1 \pmod{m}$. For multiplicative characters
$\chi, \psi \in \widehat{\mathbb{F}_q^\times}$, we have
\begin{align}
\prod_{\chi^m=1}G(\chi \psi)=-G(\psi^m)\psi(m^{-m})\prod_{\chi^m=1}G(\chi).
\end{align}
\end{theorem}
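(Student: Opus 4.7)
The plan is to prove the Davenport--Hasse multiplication relation by matching both sides as iterated character sums and identifying them via a rescaling of the summation variables. Since the result is classical (and cited from Lang), the sketch below follows the standard route through explicit Gauss sum products.

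Step 1 (Indexing the characters). Because $m \mid q-1$, there exists a character $\chi_0 \in \widehat{\mathbb{F}_q^\times}$ of exact order $m$, and the solution set of $\chi^m = \varepsilon$ is precisely $\{\chi_0^j : 0 \le j \le m-1\}$. I would reindex both products in the statement over $j$, so that each side is a product of $m$ Gauss sums.

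Step 2 (Trivial case). I would first dispose of $\psi = \varepsilon$: the right-hand side becomes $-G(\varepsilon)\,\varepsilon(m^{-m})\prod_j G(\chi_0^j) = -(-1)\cdot 1\cdot \prod_j G(\chi_0^j)$, which matches $\prod_j G(\chi_0^j)$ on the left. Similarly I would isolate and treat the case $\psi^m = \varepsilon$ (but $\psi \ne \varepsilon$) using $G(\chi)G(\chi^{-1}) = q\chi(-1)$ from Lemma \ref{fusi3} to rearrange $\prod_j G(\chi_0^j\psi)$ in pairs.

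Step 3 (Master identity for generic $\psi$). For $\psi^m \ne \varepsilon$, I would expand the left-hand side using the definition of Gauss sums:
\begin{align}
\prod_{j=0}^{m-1} G(\chi_0^j\psi) = \sum_{(x_0,\ldots,x_{m-1}) \in (\mathbb{F}_q^\times)^m} \psi\Bigl(\prod_j x_j\Bigr)\, \chi_0\Bigl(\prod_j x_j^{\,j}\Bigr)\, \theta\Bigl(\sum_j x_j\Bigr). \notag
\end{align}
Then I would introduce the substitution $x_j = t\, u_j$, where $t \in \mathbb{F}_q^\times$ plays the role of a ``scaling'' variable (to be matched with $\psi^m$) and the $u_j \in \mathbb{F}_q^\times$ encode the ``shape'' under a suitable normalization, say $\sum_j u_j = m$. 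This decouples the $\theta$-factor into a single Gauss sum $G(\psi^m)$ evaluated at a shift, producing the factor $\psi(m^{-m})$, while the residual $u_j$-sum reassembles the product $\prod_{j=0}^{m-1} G(\chi_0^j)$ on the right.

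Step 4 (Sign and normalization). The leading $-1$ in the statement is bookkeeping: it accounts for the single $G(\varepsilon) = -1$ factor hidden inside $\prod_j G(\chi_0^j)$ (namely the $j=0$ term), which appears once on each side and must be compensated when $G(\psi^m)$ is extracted from the left. The factor $\psi(m^{-m})$ arises because, under the normalization $\sum_j u_j = m$, the scaling variable $t$ representing $\sum_j x_j$ effectively contributes $m\,t$, and the $\psi$-character pulls out $\psi(m)^{-m}$.

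The main obstacle is Step 3: executing the decoupling substitution so that the multiplicative characters $\psi$ and $\chi_0$ separate cleanly and the Jacobian-like factor produces \emph{exactly} $\psi(m^{-m})$. One must also carefully handle the boundary terms where $\sum_j x_j = 0$ or where some $x_j = 0$, since Gauss sums in principle run over all of $\mathbb{F}_q$, while the substitution is well-defined only on $\mathbb{F}_q^\times$. Since the identity is standard and well documented in \cite{Lang}, in the sequel I will cite it as Theorem \ref{lemma3} without further comment.
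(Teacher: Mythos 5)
First, a framing point: the paper never proves this statement --- it is imported verbatim from Lang \cite{Lang} and used as a black box --- so there is no internal proof to compare against, and your closing plan to cite it without further comment is in fact exactly the paper's treatment. Judged as a proof attempt, however, your Step 3 contains a genuine gap: it is circular. Stratifying by $s=\sum_j x_j$ (which is what your substitution $x_j=tu_j$ with $\sum_j u_j=m$ amounts to) only re-derives the standard Gauss--Jacobi factorization. Writing $\lambda_j=\chi_0^j\psi$ and $\Lambda=\prod_j\lambda_j=\chi_0^{m(m-1)/2}\psi^m$, the $s\neq 0$ stratum gives $\prod_j G(\lambda_j)=\Lambda(m)^{-1}G(\Lambda)\cdot J$, where $J=\sum\prod_j\lambda_j(u_j)$ over tuples with $\sum_j u_j=m$, and the $s=0$ stratum vanishes whenever $\Lambda\neq\varepsilon$. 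But $\psi$ does \emph{not} decouple: $\psi\bigl(\prod_j u_j\bigr)=\prod_j\psi(u_j)$ stays attached to every variable of the constrained sum, so $J$ depends on $\psi$ a priori. Your claim that the residual $u$-sum ``reassembles $\prod_j G(\chi_0^j)$'' is precisely the assertion that $J$ is independent of $\psi$ and equals $\prod_{j=1}^{m-1}G(\chi_0^j)$ --- which, after dividing the target identity by $\psi(m^{-m})G(\psi^m)$, is the Davenport--Hasse relation restated. The substitution supplies no mechanism for this $\psi$-independence, and none exists at this elementary level: the known proofs require genuinely heavier input, e.g.\ Stickelberger's congruence, or the Gross--Koblitz formula (Theorem \ref{thm4}) combined with the multiplication formula for $\Gamma_p$, or the original Davenport--Hasse lifting argument via $L$-functions.

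Two further concrete defects. For even $m$ the ``clean separation'' fails already at the bookkeeping level: $\chi_0^{m(m-1)/2}=\chi_0^{m/2}$ is the quadratic character $\phi$, so your $t$-sum extracts $G(\phi\psi^m)$ (up to constants), not $G(\psi^m)$, and repairing this requires the $m=2$ case of the very theorem being proved. Also, in Step 2 the subcase $\psi^m=\varepsilon$, $\psi\neq\varepsilon$ does not need the pairing via Lemma \ref{fusi3} (which would not close up cleanly anyway): there $\psi\in\langle\chi_0\rangle$, so $\chi\mapsto\chi\psi$ permutes $\{\chi:\chi^m=\varepsilon\}$, giving $\prod_{\chi^m=\varepsilon}G(\chi\psi)=\prod_{\chi^m=\varepsilon}G(\chi)$, while on the right $G(\psi^m)=G(\varepsilon)=-1$ and $\psi(m^{-m})=\psi^{-m}(m)=1$, so the identity is immediate. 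Your Step 4 reading of the leading sign as absorbing the $j=0$ factor $G(\varepsilon)=-1$ is correct, but it is the only part of Steps 3--4 that survives scrutiny; as written, the core of the argument assumes what it must prove.
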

\par
In the proof of our results, the Gross-Koblitz formula plays an important role.
It relates the Gauss sums and the $p$-adic gamma function.
For $n \in\mathbb{Z}^+$,
the $p$-adic gamma function $\Gamma_p(n)$ is defined as
\begin{align}
\Gamma_p(n):=(-1)^n\prod_{0<j<n,p\nmid j}j\notag
\end{align}
and one extends it to all $x\in\mathbb{Z}_p$ by setting $\Gamma_p(0):=1$ and
\begin{align}
\Gamma_p(x):=\lim_{n\rightarrow x}\Gamma_p(n)\notag
\end{align}
for $x\neq0$, where $n$ runs through any sequence of positive integers $p$-adically approaching $x$.
This limit exists, is independent of how $n$ approaches $x$,
and determines a continuous function on $\mathbb{Z}_p$ with values in $\mathbb{Z}_p^{\times}$.
\par
Let $\pi \in \mathbb{C}_p$ be the fixed root of $x^{p-1} + p=0$ which satisfies
$\pi \equiv \zeta_p-1 \pmod{(\zeta_p-1)^2}$. Then the Gross-Koblitz formula relates Gauss sums and $p$-adic gamma function as follows.
Recall that $\omega$ denotes the Teichm\"{u}ller character of $\mathbb{F}_q$.
\begin{theorem}\emph{(\cite[Gross-Koblitz]{gross}).}\label{thm4} For $a\in \mathbb{Z}$ and $q=p^r$,
\begin{align}
G(\overline{\omega}^a)=-\pi^{(p-1)\sum_{i=0}^{r-1}\langle\frac{ap^i}{q-1} \rangle}\prod_{i=0}^{r-1}\Gamma_p\left(\langle \frac{ap^i}{q-1} \rangle\right).\notag
\end{align}
\end{theorem}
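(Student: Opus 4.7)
The plan is to follow the classical $p$-adic analytic approach of Dwork, in which the Gauss sum is expressed as a specialization of a power series whose coefficients admit an explicit description in terms of the $p$-adic gamma function. The central tool is the Dwork splitting function
\begin{align}
F(X) = \exp_p\bigl(\pi(X - X^p)\bigr),\notag
\end{align}
where $\exp_p$ denotes the $p$-adic exponential. I would first verify that $F(X)$ converges on $|X|_p < p^{1/(p-1)}$, hence at all Teichm\"uller lifts, and that it enjoys the splitting
\begin{align}
\theta(x) = \prod_{i=0}^{r-1} F\bigl(\hat{x}^{p^i}\bigr)\notag
\end{align}
for every $x \in \mathbb{F}_q^{\times}$ with Teichm\"uller lift $\hat{x} \in \mathbb{Z}_q^{\times}$. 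This identity can be checked by a formal power-series calculation that uses the Artin--Hasse exponential together with the normalization $\pi^{p-1} = -p$.

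Substituting the splitting into $G(\overline{\omega}^a) = \sum_x \overline{\omega}^a(x)\theta(x)$ and expanding $F(X) = \sum_{n \geq 0} c_n X^n$, one obtains
\begin{align}
G(\overline{\omega}^a) = \sum_{n_0, \ldots, n_{r-1} \geq 0} c_{n_0} c_{n_1} \cdots c_{n_{r-1}} \sum_{x \in \mathbb{F}_q^{\times}} \overline{\omega}^a(x)\, \hat{x}^{\,n_0 + n_1 p + \cdots + n_{r-1} p^{r-1}}.\notag
\end{align}
By Lemma \ref{lemma2} applied to the Teichm\"uller character together with the identity $\hat{x}^{q-1} = 1$, the inner character sum equals $q - 1$ exactly when $n_0 + n_1 p + \cdots + n_{r-1} p^{r-1} \equiv a \pmod{q-1}$ and vanishes otherwise. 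Writing $a = \sum_{i=0}^{r-1} a_i p^i$ with $0 \leq a_i \leq p-1$, a short calculation using Legendre's formula $v_p(n!) = (n - s_p(n))/(p-1)$ shows that the $\pi$-adic valuation of $c_n$ equals the digit sum $s_p(n)$, so the minimum $\pi$-adic valuation attained in the constrained sum is $\sum_i a_i$, achieved uniquely at $n_i = a_i$. A direct check identifies this digit sum with $(p-1)\sum_i \langle ap^i/(q-1)\rangle$, matching the exponent of $\pi$ in the statement and agreeing with Stickelberger's theorem on the $p$-adic valuation of Gauss sums.

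The main obstacle is to identify the exact leading unit coefficient with $-\prod_{i=0}^{r-1} \Gamma_p\bigl(\langle ap^i/(q-1)\rangle\bigr)$ and to verify that the subleading contributions from other tuples $(n_0, \ldots, n_{r-1})$ satisfying the congruence cancel exactly. Dwork's key insight is that $F(X)$ is characterized by a first-order $p$-adic differential equation whose formal power-series solution has coefficients expressible in closed form via $\Gamma_p$, reflecting the fact that $\Gamma_p$ $p$-adically interpolates $(-1)^n (n-1)!$ on integers coprime to $p$. Combining this characterization with the functional equation $\Gamma_p(x+1) = -x\,\Gamma_p(x)$ for $p \nmid x$ and a careful telescoping of the constrained sum over $(n_0, \ldots, n_{r-1})$ produces the closed form $-\prod_i \Gamma_p(\langle ap^i/(q-1)\rangle)$ for the unit part. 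Carrying out this coefficient identification rigorously, and controlling the cancellation of all higher-order terms, is the technical heart of the argument.
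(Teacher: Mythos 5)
You should first be aware that the paper contains no proof of this statement to compare against: this is the Gross--Koblitz formula itself, which the authors quote verbatim from \cite{gross} and use as a black box (together with the Davenport--Hasse relation) in their computations. So your sketch must be judged on its own merits. At the structural level it is assembled correctly, and it follows the standard Dwork--Boyarsky analytic route to the theorem (as in \cite{Lang} or A.~Robert's \emph{The Gross--Koblitz formula revisited}): the splitting function $F(X)=\exp_p\bigl(\pi(X-X^p)\bigr)$, the factorization $\theta(x)=\prod_{i=0}^{r-1}F(\hat{x}^{p^i})$, and orthogonality reducing $G(\overline{\omega}^a)$ to $(q-1)$ times the sum of $c_{n_0}\cdots c_{n_{r-1}}$ over tuples with $\sum_i n_ip^i\equiv a\pmod{q-1}$ are all sound, as is the identification of $(p-1)\sum_i\langle ap^i/(q-1)\rangle$ with the $p$-adic digit sum of $a$ reduced modulo $q-1$.

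However, there is a genuine gap, and several of the supporting claims are false as stated. First, the valuation claim $v_\pi(c_n)=s_p(n)$ is wrong: Legendre's formula gives this for the coefficients $\pi^n/n!$ of $\exp_p(\pi X)$, but the Dwork coefficients are $c_n=\sum_k \bigl(\pi^{n-pk}/(n-pk)!\bigr)\bigl((-\pi)^k/k!\bigr)$ and cancellation can raise the valuation; already $c_p=\pi^p/p!-\pi=-\pi\bigl(1+1/(p-1)!\bigr)$ satisfies $v_\pi(c_p)\geq p$ by Wilson's theorem, while $s_p(p)=1$. Only a lower bound survives (the standard estimate is $v_p(c_n)\geq n(p-1)/p^2$). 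Second, the minimum is \emph{not} attained uniquely at $n_i=a_i$ at the level of your bound: for $q=p$ and $a=1$, both $n=1$ and $n=p$ satisfy the congruence with digit sum $1$. Third, and decisively, the subleading tuples do not ``cancel exactly''; they contribute. The case $a\equiv 0\pmod{q-1}$ makes this vivid: the leading tuple gives $(q-1)\cdot 1$, yet $G(\varepsilon)=-1=(q-1)-q$, so the remaining terms sum to exactly $-q$. Consequently the identification of the unit part with $-\prod_{i}\Gamma_p\bigl(\langle ap^i/(q-1)\rangle\bigr)$ --- which is the entire content of the theorem beyond Stickelberger's valuation statement --- cannot be obtained by isolating a leading term and discarding the rest; it requires an exact evaluation of the full constrained sum (Robert's congruence/limit argument relating partial sums of the $c_n$ to $\Gamma_p$, Dwork's Boyarsky principle, or the original cohomological proof of \cite{gross}), and your appeal to an unspecified differential-equation characterization of $F$ plus telescoping with $\Gamma_p(x+1)=-x\Gamma_p(x)$ does not supply it. A minor further slip: the disc of convergence $|X|_p<p^{1/(p-1)}$ overstates what the Artin--Hasse estimate yields, namely $|X|_p<p^{(p-1)/p^2}$; since this still exceeds $1$, convergence at Teichm\"{u}ller lifts holds, so this error is harmless, but the two substantive defects above mean the technical heart of the proof is missing rather than merely deferred.
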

\section{Proof of the results}
\par We first state a lemma which we will use to prove the main results. This lemma is a generalization of Lemma 4.1 in \cite{mccarthy2}.
For a proof, see \cite{BS1}.
\begin{lemma}\emph{(\cite[Lemma 3.1]{BS1}).}\label{lemma4}
Let $p$ be a prime and $q=p^r$. For $0\leq j\leq q-2$ and $t\in \mathbb{Z^+}$ with $p\nmid t$, we have
\begin{align}\label{eq8}
\omega(t^{tj})\prod_{i=0}^{r-1}\Gamma_p\left(\langle \frac{tp^ij}{q-1}\rangle\right)
\prod_{h=1}^{t-1}\Gamma_p\left(\langle\frac{hp^i}{t}\rangle\right)
=\prod_{i=0}^{r-1}\prod_{h=0}^{t-1}\Gamma_p\left(\langle\frac{p^ih}{t}+\frac{p^ij}{q-1}\rangle\right)
\end{align}
and
\begin{align}\label{eq9}
\omega(t^{-tj})\prod_{i=0}^{r-1}\Gamma_p\left(\langle\frac{-tp^ij}{q-1}\rangle\right)
\prod_{h=1}^{t-1}\Gamma_p\left(\langle \frac{hp^i}{t}\rangle\right)
=\prod_{i=0}^{r-1}\prod_{h=0}^{t-1}\Gamma_p\left(\langle\frac{p^i(1+h)}{t}-\frac{p^ij}{q-1}\rangle \right).
\end{align}
\end{lemma}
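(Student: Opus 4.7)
The plan is to interpret the right-hand side via Theorem \ref{mc} as a trace of Frobenius, expand the left-hand $_2G_2$ as a character sum using Gross--Koblitz and Lemma \ref{lemma4}, and match the two through a direct point-counting computation. First, I would apply Theorem \ref{mc} (in its $\mathbb{F}_q$ form) to the factor ${_2G_2}\bigl[\frac14,\frac34;\frac13,\frac23\bigm|-\frac{27n^2}{4m^3}\bigr]_q$; the hypotheses on $m,n,d$ guarantee that the elliptic curve $E: y^2 = x^3 + mx + n$ has $j(E)\neq 0,1728$ and a well-defined trace of Frobenius, so $q\phi(n)\cdot{_2G_2}[\cdots]_q = a_q(E) = q+1-\#E(\mathbb{F}_q)$. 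The theorem then becomes equivalent to showing $q\phi(-3d)\cdot{_2G_2}\bigl[\frac12,\frac12;\frac16,\frac56\bigm|\frac{1}{d^3}\bigr]_q + \#E(\mathbb{F}_q) = 1+\alpha+\phi(-3(8+92d^3+35d^6))$.

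Next, I would expand the left-hand $_2G_2$ via Definition \ref{defin1} with $a_1=a_2=\frac12$ and $(b_1,b_2)=(\frac16,\frac56)$. Applying Theorem \ref{thm4} (Gross--Koblitz) turns each $p$-adic gamma quotient into a Gauss-sum quotient, and I would invoke Lemma \ref{lemma4} with $t=2,3,6$ together with the Davenport--Hasse relation (Theorem \ref{lemma3}) to consolidate the $\frac16,\frac56$ contributions into $\frac12,\frac13,\frac23$ contributions; this is the natural way to match the denominator parameters $(\frac16,\frac56)$ on the LHS with the denominator $(\frac13,\frac23)$ appearing on the RHS. After interchanging the order of summation and using Lemma \ref{lemma1} to recognise the additive character $\theta$ hidden in the Gauss sums, the expression should collapse to a character sum of the form $\sum_{x\in\mathbb{F}_q}\phi(f(x))$ for a cubic $f$ whose coefficients are polynomials in $d$.

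In parallel, I would expand $\#E(\mathbb{F}_q) = q+1+\sum_{x}\phi(x^3+mx+n)$ and perform an explicit affine substitution in $x$, guided by the specific form $m=-27d(d^3+8)$ and $n=27(d^6-20d^3-8)$ (for example, scaling by $3$ and translating by a multiple of $d^2$), so as to align $x^3+mx+n$ with the cubic $f$ produced in the previous step. The residual $\phi$-sums of constants and linear terms, evaluated via Lemma \ref{lemma2}, should then produce exactly the elementary correction $1+\alpha+\phi(-3(8+92d^3+35d^6))$.

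The main obstacle is the Gauss-sum bookkeeping in the second paragraph: the simplifications via Davenport--Hasse and Lemma \ref{lemma4} work cleanly only for generic $j$, and the boundary indices $j\in\{0,\,(q-1)/2,\,(q-1)/3,\,2(q-1)/3,\,(q-1)/6,\,5(q-1)/6\}$ must be handled separately because Lemma \ref{fusi3} and Gross--Koblitz degenerate at these $j$'s. It is precisely these boundary contributions that should supply the term $\alpha-q$ together with $\phi(-3(8+92d^3+35d^6))$, and the dichotomy in the definition of $\alpha$ will reflect whether $(q-1)/3$ and $(q-1)/6$ are legitimate integer indices --- equivalently, whether $q\equiv 1\pmod 3$.
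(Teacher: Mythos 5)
Your proposal does not prove the statement at hand. The statement is Lemma \ref{lemma4}, a purely local, multiplication-type identity for Morita's $p$-adic gamma function: it holds for \emph{every} $0\leq j\leq q-2$ and every $t\in\mathbb{Z}^{+}$ with $p\nmid t$, and it involves no elliptic curve, no character $\phi$, no parameters $m,n,d$, and no constant $\alpha$. What you have sketched is instead a strategy for Theorem \ref{MT1}: converting ${_2}G_2\bigl[\frac14,\frac34;\frac13,\frac23\bigm|\cdot\bigr]_q$ into a trace of Frobenius via Theorem \ref{mc}, expanding the other ${_2}G_2$ by Gross--Koblitz, and matching against a point count on a cubic. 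That is, in outline, how the paper proves Theorem \ref{hessian2} and Theorem \ref{MT1} --- but it has no bearing on the lemma you were asked to prove. The decisive defect is that your second paragraph explicitly ``invokes Lemma \ref{lemma4} with $t=2,3,6$'': as a purported proof of Lemma \ref{lemma4} the argument is circular, since it assumes the very identities \eqref{eq8} and \eqref{eq9} it is meant to establish. No amount of care with the boundary indices $j\in\{0,(q-1)/2,(q-1)/3,\ldots\}$ can repair this, because the target statement simply is not of the form your computation produces.

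For the record, the paper itself does not reprove the lemma but cites \cite{BS1}, where it appears as Lemma 3.1 and generalizes \cite[Lemma 4.1]{mccarthy2} from $\mathbb{F}_p$ to $\mathbb{F}_q$. The actual proof runs in the opposite direction from your plan and is a short Gauss-sum computation: one starts from the Davenport--Hasse relation (Theorem \ref{lemma3}) with $\psi=\overline{\omega}^{\,\mp j}$, rewrites every Gauss sum on both sides via the Gross--Koblitz formula (Theorem \ref{thm4}), verifies that the resulting exponents of $\pi$ on the two sides agree, and cancels them to equate the $\Gamma_p$-products. In this computation the factor $\omega(t^{\pm tj})$ is exactly the $\psi(t^{-t})$ term of Davenport--Hasse, and the products $\prod_{h=1}^{t-1}\Gamma_p\bigl(\langle hp^i/t\rangle\bigr)$ arise from $\prod_{\chi^t=\varepsilon}G(\chi)$; in other words, the lemma is the $p$-adic (Gauss-multiplication) input that \emph{feeds into} the Gauss-sum bookkeeping you describe, not a consequence of it. To fix your submission you would need to discard the entire point-counting framework and supply this self-contained Gross--Koblitz/Davenport--Hasse argument (or an equivalent direct multiplication-formula argument for $\Gamma_p$) instead.
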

\begin{lemma}\label{lemma5}
For $1\leq l\leq q-2$ such that $l\neq \frac{q-1}{2}$, and $0\leq i\leq r-1$, we have
\begin{align}\label{eq-51}
&\lfloor\frac{3lp^i}{q-1}\rfloor+3\lfloor\frac{-lp^i}{q-1}\rfloor-
3\lfloor\frac{-2lp^i}{q-1}\rfloor-\lfloor\frac{6lp^i}{q-1}\rfloor\notag\\
&=-2\lfloor\langle \frac{p^i}{2}\rangle- \frac{lp^i}{q-1}\rfloor
-\lfloor\langle \frac{-p^i}{6} \rangle+ \frac{lp^i}{q-1}\rfloor-\lfloor\langle
\frac{-5p^i}{6} \rangle+\frac{lp^i}{q-1}\rfloor.
\end{align}
\end{lemma}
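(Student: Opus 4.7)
The plan is to set $x=\tfrac{lp^i}{q-1}$ and reduce both sides of \eqref{eq-51} to a common linear combination of floor functions, using Hermite's identity
$$\lfloor ny\rfloor=\sum_{k=0}^{n-1}\lfloor y+k/n\rfloor$$
and the standard fact $\lfloor y\rfloor+\lfloor -y\rfloor=-1$ for $y\notin\mathbb{Z}$ (and $0$ otherwise).

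\medskip
\noindent\textbf{Step 1 (simplify the LHS).} Applying Hermite with $n=6$ and $n=3$ gives
$$\lfloor 3x\rfloor-\lfloor 6x\rfloor=-\lfloor x+\tfrac{1}{6}\rfloor-\lfloor x+\tfrac{1}{2}\rfloor-\lfloor x+\tfrac{5}{6}\rfloor.$$
Applying Hermite with $n=2$ to $\lfloor -2x\rfloor=\lfloor -x\rfloor+\lfloor\tfrac{1}{2}-x\rfloor$, one obtains
$$3\lfloor -x\rfloor-3\lfloor -2x\rfloor=-3\lfloor\tfrac{1}{2}-x\rfloor.$$
Adding these two identities, the LHS of \eqref{eq-51} becomes
$$-3\lfloor\tfrac{1}{2}-x\rfloor-\lfloor x+\tfrac{1}{6}\rfloor-\lfloor x+\tfrac{1}{2}\rfloor-\lfloor x+\tfrac{5}{6}\rfloor.$$

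\medskip
\noindent\textbf{Step 2 (simplify the RHS).} Since $p>3$, one has $\gcd(p,6)=1$, so $p^i$ is odd and $p^i\equiv 1$ or $5\pmod{6}$. Hence $\langle p^i/2\rangle=\tfrac{1}{2}$, and in either residue case $\{\langle -p^i/6\rangle,\langle -5p^i/6\rangle\}=\{\tfrac{1}{6},\tfrac{5}{6}\}$. Therefore the RHS of \eqref{eq-51} equals
$$-2\lfloor\tfrac{1}{2}-x\rfloor-\lfloor x+\tfrac{1}{6}\rfloor-\lfloor x+\tfrac{5}{6}\rfloor,$$
independent of the residue of $p^i$ modulo $6$.

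\medskip
\noindent\textbf{Step 3 (close the gap).} Subtracting, LHS$-$RHS reduces to $-\lfloor\tfrac{1}{2}-x\rfloor-\lfloor x+\tfrac{1}{2}\rfloor$. Setting $y=x+\tfrac{1}{2}$ and using $\lfloor 1-y\rfloor+\lfloor y\rfloor=0$ whenever $y\notin\mathbb{Z}$, this difference vanishes precisely when $x+\tfrac{1}{2}\notin\mathbb{Z}$. Now $x+\tfrac{1}{2}\in\mathbb{Z}$ would force $(q-1)\mid 2lp^i$; since $\gcd(p,q-1)=1$ and $2\le 2l\le 2(q-1)-2$, this requires $2l=q-1$, i.e., $l=(q-1)/2$, which is excluded by hypothesis.

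\medskip
There is no genuine obstacle: the argument is essentially a careful bookkeeping of Hermite's identity together with a single case analysis $p^i\bmod 6$. The one point one must not skip is Step~3, where the exceptional case $l=(q-1)/2$ is exactly what the excluded hypothesis removes, so the statement is sharp in that respect.
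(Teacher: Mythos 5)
Your proof is correct, and it takes a genuinely different route from the paper. The paper's own argument is a terse case analysis: it writes $\lfloor\frac{6lp^i}{q-1}\rfloor=6u+v$ with $0\leq v\leq 5$ and asserts that each of the six cases can be checked directly. Your approach instead uses Hermite's identity to collapse both sides to the same combination of shifted floors, leaving only the residual term $-\lfloor\tfrac12-x\rfloor-\lfloor x+\tfrac12\rfloor$ with $x=\tfrac{lp^i}{q-1}$; this vanishes exactly when $x+\tfrac12\notin\mathbb{Z}$, which you correctly trace back (via $\gcd(p,q-1)=1$ and $1\leq l\leq q-2$) to the excluded case $l=\tfrac{q-1}{2}$. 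I verified Steps 1--3: the two Hermite reductions, the computation of the fractional parts $\langle p^i/2\rangle=\tfrac12$ and $\{\langle -p^i/6\rangle,\langle -5p^i/6\rangle\}=\{\tfrac16,\tfrac56\}$ for $p^i\equiv\pm1\pmod 6$, and the final cancellation are all sound. What your argument buys over the paper's is twofold: it is a complete proof rather than a sketch, and it makes transparent why the hypothesis $l\neq\tfrac{q-1}{2}$ is both necessary and sufficient, i.e., that the identity is sharp. The only point worth flagging is that you invoke $p>3$, which is not written in the lemma's statement but is implicit from the paper's standing conventions (the lemma is only applied for $p>5$, and the fractional-part computations in Step 2 do require $\gcd(p,6)=1$); it would be good to state that assumption explicitly at the outset.
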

\begin{proof}
Since $\lfloor\frac{6lp^i}{q-1}\rfloor$ can be written as $6u+v$, for some $u,v \in \mathbb{Z}$ such that $0\leq v\leq 5$,
\eqref{eq-51} can be verified by considering the cases $v=0, 1, \ldots, 5$.
For the case $v=0$ we have $\lfloor\frac{6lp^i}{q-1}\rfloor=6u$, and then it is easy to check that both the sides of
\eqref{eq-51} are equal to zero. Similarly, for other values of $v$ one can verify the result.
\end{proof}
To prove Theorem \ref{MT1}, we will first express the number of points on the Hessian form of elliptic curves. Let $a\in \mathbb{F}_q$
be such that $a^3\neq 1$. Then the Hessian curve over $\mathbb{F}_q$ is given by the cubic equation
\begin{align}\label{hessian1}
 C_a: x^3+y^3+1=3axy.
\end{align}
We express the number of $\mathbb{F}_q$-points on $C_a$ in the following theorem.
Let $C_a(\mathbb{F}_q)=\{(x, y)\in \mathbb{F}_q^2: x^3+y^3+1=3axy\}$ be the set of all $\mathbb{F}_q$-points on $C_a$.
\begin{theorem}\label{hessian2}
Let $q=p^r, p > 5$. Then
\begin{align}
\#C_a(\mathbb{F}_q)=\alpha-1+q-q\phi(-3a)\cdot{_2}G_2\left[ \begin{array}{cc}
              \frac{1}{2}, & \frac{1}{2} \\
              \frac{1}{6}, & \frac{5}{6}
            \end{array}|\dfrac{1}{a^3}
 \right]_q,\notag
\end{align}
where
$\alpha=\left\{
           \begin{array}{ll}
             5-6\phi(-3), & \hbox{if~ $q\equiv 1\pmod{3}$;} \\
             1, & \hbox{if~ $q\not\equiv 1\pmod{3}$.}
           \end{array}
         \right.$
\end{theorem}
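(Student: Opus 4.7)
My plan is a direct character-sum computation that avoids passing through a Weierstrass model. First I would express the counting function via orthogonality of the additive character $\theta$ as
\[
\#C_a(\mathbb{F}_q) = \frac{1}{q}\sum_{z\in\mathbb{F}_q}\sum_{x,y\in\mathbb{F}_q}\theta\bigl(z(x^3+y^3+1-3axy)\bigr),
\]
and isolate the $z=0$ term, which contributes $q$, together with the degenerate strata $x=0$ or $y=0$. Those degenerate contributions reduce to evaluating cubic sums of the shape $\sum_{z\in\mathbb{F}_q^\times}\theta(z)\sum_{u\in\mathbb{F}_q^\times}\theta(zu^3)$; the inner sum equals $-1$ when the cube map is a bijection of $\mathbb{F}_q^\times$ (i.e., $q\not\equiv 1\pmod 3$) and otherwise splits into a piece involving the two nontrivial cubic characters and their Gauss sums. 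This dichotomy is precisely what will produce the two cases of the constant $\alpha$.

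On the main stratum $x,y,z\in\mathbb{F}_q^\times$, I would apply Lemma \ref{lemma1} to each of the four factors $\theta(z)$, $\theta(zx^3)$, $\theta(zy^3)$ and $\theta(-3azxy)$, obtaining a quadruple sum over indices $m_1,m_2,m_3,m_4$ whose summand is a product of four Gauss sums multiplied by $T^{m_1+m_2+m_3+m_4}(z)\,T^{3m_2+m_4}(x)\,T^{3m_3+m_4}(y)\,T^{m_4}(-3a)$. Orthogonality (Lemma \ref{lemma2}) then collapses the sums over $x,y,z$ to three congruences modulo $q-1$, leaving a single sum over one index $j$ in which the surviving Gauss sums have arguments involving $j$, $j/2$ and $j/6$.

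At this point I would invoke the Davenport--Hasse relation (Theorem \ref{lemma3}) with $m=2$ and $m=6$ so that the $j$-dependent Gauss sums acquire the form $G(\overline\omega^{a_i(q-1)-j})$ for $a_i\in\{\tfrac12,\tfrac12,\tfrac16,\tfrac56\}$, with the auxiliary Gauss sums reducing to a fixed combination of $G(\overline\omega^{(q-1)/2})$, $G(\overline\omega^{(q-1)/6})$ and $G(\overline\omega^{5(q-1)/6})$. Converting every Gauss sum to a $p$-adic gamma product via Gross--Koblitz (Theorem \ref{thm4}), using Lemma \ref{lemma4} with $t=2$ and $t=6$ to clear the auxiliary gamma factors, and applying Lemma \ref{lemma5} to reconcile the exponents of $-p$ coming from the $\pi^{(p-1)\sum\langle\cdot\rangle}$ factors with the floor-function exponents in the definition of $_2G_2$, should produce exactly
\[
-\,q\,\phi(-3a)\cdot{}_2G_2\!\left[\begin{array}{cc}\tfrac12,&\tfrac12\\\tfrac16,&\tfrac56\end{array}\Big|\tfrac1{a^3}\right]_q,
\]
the quadratic character $\phi(-3a)$ coming from the surviving $T^{m_4}(-3a)$ after Davenport--Hasse.

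The main obstacle will be the bookkeeping in this last stage: carrying all the fixed Gauss-sum factors through the conversion to $p$-adic gamma functions and recognising that everything not packaged into the $_2G_2$ collects precisely into the constant $\alpha-1$ of the statement. Particularly delicate is the tracking of those exceptional values of $j$ at which individual Gauss sums degenerate, for which Lemma \ref{fusi3} will be needed, and the propagation of the $q\pmod 3$ case split from the degenerate strata through the Davenport--Hasse reduction so that the two formulas for $\alpha$ align consistently at the end.
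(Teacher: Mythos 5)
Your proposal follows essentially the same route as the paper's proof: additive-character orthogonality, stratification by the vanishing of $x$, $y$, $z$, expansion of the main stratum via Lemma \ref{lemma1} and orthogonality, then Davenport--Hasse, Gross--Koblitz, Lemma \ref{lemma4} and Lemma \ref{lemma5} to assemble the ${}_2G_2$ term, with Lemma \ref{fusi3} handling the degenerate Gauss sums. One minor caveat: the $-6\phi(-3)$ part of $\alpha$ actually arises from the exceptional indices in the main stratum when $q\equiv 1\pmod 3$, not from the degenerate strata alone, but your closing paragraph already anticipates exactly this bookkeeping.
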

\begin{proof}
We have
$\#C_{a}(\mathbb{F}_{q})=\#\{(x,y)\in\mathbb{F}_{q}\times\mathbb{F}_{q}:\ P(x,y)=0\}$,\\
where $P(x,y)=x^{3}+y^{3}-3axy+1$. Using the identity
\begin{align}
\sum_{z\in\mathbb{F}_q}\theta(zP(x,y))=\left\{
                                         \begin{array}{ll}
                                           q, & \hbox{if $P(x,y)=0$;} \\
                                           0, & \hbox{if $P(x,y)\neq0$,}
                                         \end{array}
                                       \right.\notag
\end{align}
we obtain
\begin{align}\label{eq-53}
q\cdot\#C_{a}(\mathbb{F}_{q})&=\sum_{x,y,z\in\mathbb{F}_{q}}\theta(zP(x,y))\notag\\
&=q^2+\sum_{z\in\mathbb{F}_{q}^{\times}}\theta(z)+\sum_{y,z\in\mathbb{F}_{q}^{\times}}\theta(zy^3)\theta(z)\notag\\
&~+\sum_{x,z\in\mathbb{F}_{q}^{\times}}\theta(zx^3)\theta(z)+\sum_{x,y,z\in\mathbb{F}_{q}^{\times}}\theta(z)\theta(zx^3)
\theta(zy^3)\theta(-3axyz)\notag\\
&:=q^2+A+B+C+D.
\end{align}
Using Lemma \ref{lemma2}, Lemma \ref{fusi3} and Lemma \ref{lemma1}, we find $A$, $B$, $C$ and $D$ separately. We have
\begin{align}
A=\frac{1}{q-1}\sum_{l=0}^{q-2}G_{-l}\sum_{z\in\mathbb{F}_{q}^{\times}}T^{l}(z).\notag
\end{align}
The inner sum in the expression of $A$ is non zero only if $l=0$, and hence $A=-1$. We have
\begin{align}
B&=\sum_{y,z\in\mathbb{F}_{q}^{\times}}\theta(zy^3)\theta(z)\notag\\
&=\frac{1}{(q-1)^2}\sum_{y,z\in\mathbb{F}_{q}^{\times}}\sum_{l,m=0}^{q-2}G_{-m}T^m(zy^3)G_{-l}T^l(z)\notag\\
&=\frac{1}{(q-1)^2}\sum_{l,m=0}^{q-2}G_{-m}G_{-l}\sum_{z\in\mathbb{F}_{q}^{\times}}T^{l+m}(z)
\sum_{y\in\mathbb{F}_{q}^{\times}}T^{3m}(y),\notag
\end{align}
which is non zero only if $l+m=0$ and $3m=0$.
By considering the following two cases we find $B$.\\
Case 1: If $q\equiv 1\pmod{3}$ then $m=0,\frac{q-1}{3}$ or $\frac{2(q-1)}{3}$; and $l=0,-\frac{q-1}{3}$ or $-\frac{2(q-1)}{3}$. Hence,
\begin{align}
B&=G_{0}G_{0}+G_{-\frac{q-1}{3}}G_{\frac{q-1}{3}}+G_{-\frac{2(q-1)}{3}}G_{\frac{2(q-1)}{3}}\notag\\
&=1+2q.\notag
\end{align}
Case 2: If $q\not\equiv 1\pmod{3}$ then $l=m=0$, and hence $B=G_{0}G_{0}=1$. Also,
\begin{align}
C&=\sum_{x,z\in\mathbb{F}_{q}^{\times}}\theta(zx^3)\theta(z)\notag\\
&=B.\notag
\end{align}
Finally,
\begin{align}
D&=\sum_{x,y,z\in\mathbb{F}_{q}^{\times}}\theta(z)\theta(zx^3)\theta(zy^3)\theta(-3axyz)\notag\\
&=\frac{1}{(q-1)^4}\sum_{x,y,z\in\mathbb{F}_{q}^{\times}}\sum_{l,m,n,k=0}^{q-2}G_{-l}G_{-m}G_{-n}G_{-k}T^{l}(zx^3)\notag\\
&~\times T^{m}(zy^3)T^{n}(z)T^{k}(-3axyz)\notag\\
&=\frac{1}{(q-1)^4}\sum_{l,m,n,k=0}^{q-2}G_{-l}G_{-m}G_{-n}G_{-k}T^{k}(-3a)\notag\\
&\times~\sum_{x\in\mathbb{F}_{q}^{\times}}T^{3l+k}(x)\sum_{y\in\mathbb{F}_{q}^{\times}}T^{3m+k}(y)
\sum_{z\in\mathbb{F}_{q}^{\times}}T^{l+m+n+k}(z),\notag
\end{align}
which is non zero only if $3l+k=0$, $3m+k=0$, and $l+m+n+k=0$.
We now consider the following two cases.\\
Case 1: If $q\equiv 1\pmod{3}$ then $m=l$, $l+\frac{q-1}{3}$ or $l+\frac{2(q-1)}{3}$; $k=-3l$;
and $n=l$, $l-\frac{q-1}{3}$ or $l-\frac{2(q-1)}{3}$, and hence
\begin{align}\label{eq-54}
D&=\frac{1}{q-1}\sum_{l=0}^{q-2}G_{-l}G_{-l}G_{-l}G_{3l}T^{-3l}(-3a)\notag\\
&~+\frac{2}{q-1}\sum_{l=0}^{q-2}G_{-l}G_{-l-\frac{q-1}{3}}G_{-l-\frac{2(q-1)}{3}}G_{3l}T^{-3l}(-3a).
\end{align}
Transforming $l\rightarrow l-\frac{q-1}{2}$, we have
\begin{align}
D&=\frac{1}{q-1}\sum_{l=0}^{q-2}G_{-l+\frac{q-1}{2}}G_{-l+\frac{q-1}{2}}G_{-l+\frac{q-1}{2}}G_{3l-\frac{q-1}{2}}
T^{-3l+\frac{q-1}{2}}(-3a)\notag\\
&~+\frac{2}{q-1}\sum_{l=0}^{q-2}G_{-l+\frac{q-1}{2}}G_{-l+\frac{q-1}{6}}G_{-l-\frac{q-1}{6}}G_{3l-\frac{q-1}{2}}
T^{-3l+\frac{q-1}{2}}(-3a)\notag\\
&=\frac{\phi(-3a)}{q-1}\sum_{l=0}^{q-2}G_{-l+\frac{q-1}{2}}G_{-l+\frac{q-1}{2}}G_{-l+\frac{q-1}{2}}G_{3l-\frac{q-1}{2}}
T^{-3l}(-3a)\notag\\
&~+\frac{2\phi(-3a)}{q-1}\sum_{l=0}^{q-2}G_{-l+\frac{q-1}{2}}G_{-l+\frac{q-1}{6}}G_{-l-\frac{q-1}{6}}G_{3l-\frac{q-1}{2}}
T^{-3l}(-3a).\notag
\end{align}
Using Davenport-Hasse relation for certain values of $m$ and $\psi$ we deduce the following relations:
For $m=2$, $\psi=T^{-l}$, we have
\begin{align}
G_{-l+\frac{q-1}{2}}=\frac{G_{\frac{q-1}{2}}G_{-2l}T^l(4)}{G_{-l}},\notag
\end{align}
and for $m=2$, $\psi=T^{3l}$, we have
\begin{align}
G_{3l-\frac{q-1}{2}}=\frac{G_{\frac{q-1}{2}}G_{6l}T^{-3l}(4)}{G_{3l}}.\notag
\end{align}
For $m=6$, $\psi=T^{-l}$, we have
\begin{align}
&G_{-l+\frac{q-1}{2}}G_{-l+\frac{q-1}{3}}G_{-l+\frac{2(q-1)}{3}}G_{-l+\frac{q-1}{6}}G_{-l+\frac{5(q-1)}{6}}\notag\\
&=\frac{q^2\phi(-1)G_{\frac{q-1}{2}}G_{-6l}T^{6l}(6)}{G_{-l}},\notag
\end{align}
and for $m=3$, $\psi=T^{-l}$, we have
\begin{align}
G_{-l+\frac{q-1}{3}}G_{-l+\frac{2(q-1)}{3}}=\frac{qG_{-3l}T^{3l}(3)}{G_{-l}}.\notag
\end{align}
Using all these expressions and Lemma \ref{lemma2} and Lemma \ref{fusi3} we find that
\begin{align}
D&=\frac{\phi(-3a)}{q-1}\sum_{l=0}^{q-2}\frac{G_{-2l}G_{-2l}G_{-2l}G_{6l}G_{\frac{q-1}{2}}^4T^{-3l}(-3a)}
{G_{-l}G_{-l}G_{-l}G_{3l}}\notag\\
&~+\frac{2\phi(-3a)}{q-1}\sum_{l=0}^{q-2}\frac{G_{-l+\frac{q-1}{2}}G_{-l+\frac{q-1}{3}}G_{-l+\frac{2(q-1)}{3}}
G_{-l+\frac{q-1}{6}}G_{-l+\frac{5(q-1)}{6}}G_{3l-\frac{q-1}{2}}T^{-3l}(-3a)}{G_{-l+\frac{q-1}{3}}G_{-l+\frac{2(q-1)}{3}}}\notag\\
&=\frac{q^2\phi(-3a)}{q-1}\sum_{l=0}^{q-2}\frac{G_{-2l}G_{-2l}G_{-2l}G_{6l}T^{-3l}(-3a)}{G_{-l}G_{-l}G_{-l}G_{3l}}\notag\\
&~+\frac{2q^2\phi(-3a)}{q-1}\sum_{l=0}^{q-2}\frac{G_{-6l}G_{6l}T^{-3l}(-a)}{G_{3l}G_{-3l}}\notag\\
&=\frac{q^2\phi(-3a)}{q-1}\sum_{l=0}^{q-2}\frac{G_{-2l}^3G_{6l}}{G_{-l}^3G_{3l}}T^{-3l}(-3a)+
\frac{6q^2\phi(-3a)\phi(a)}{(q-1)q}\notag\\
&~+\frac{2q^2\phi(-3a)}{q-1}\sum_{l=0, l\neq\frac{q-1}{6},\frac{q-1}{2},\frac{5(q-1)}{6}}^{q-2}T^{3l}(\frac{1}{a})\notag\\
&=\frac{q^2\phi(-3a)}{q-1}\sum_{l=0}^{q-2}\frac{G_{-2l}^3G_{6l}}{G_{-l}^3G_{3l}}T^{-3l}(-3a)+
\frac{6q\phi(-3)}{q-1}\notag\\
&~+\frac{2q^2\phi(-3a)}{q-1}\sum_{l=0}^{q-2}T^{3l}(\frac{1}{a})-\frac{6q^2\phi(-3a)\phi(a)}{q-1}\notag\\
&=\frac{q^2\phi(-3a)}{q-1}\sum_{l=0}^{q-2}\frac{G_{-2l}^3G_{6l}}{G_{-l}^3G_{3l}}T^{-3l}(-3a)-6q\phi(-3)\notag\\
&=\frac{q^2\phi(-3a)}{q-1}\sum_{l=0,l\neq\frac{q-1}{2}}^{q-2}\frac{G_{-2l}^3G_{6l}}{G_{-l}^3G_{3l}}T^{-3l}(-3a)
+\frac{1}{q-1}-6q\phi(-3).\notag
\end{align}
Taking $T=\overline{\omega}$ and using Gross-Koblitz formula we deduce that
\begin{align}
D&=\frac{q^2\phi(-3a)}{q-1}\sum_{l=0,l\neq\frac{q-1}{2}}^{q-2}\pi^{(p-1)\sum_{i=0}^{r-1}
\{3\langle\frac{-2lp^i}{q-1}\rangle+\langle\frac{6lp^i}{q-1}\rangle-\langle\frac{3lp^i}{q-1}\rangle-
3\langle\frac{-lp^i}{q-1}\rangle\}}\notag\\
&~\times\overline{\omega}^l\left(-\frac{1}{27a^3}\right)
\prod_{i=0}^{r-1}\frac{\Gamma_{p}^3(\langle\frac{-2lp^i}{q-1}\rangle)\Gamma_p(\langle\frac{6lp^i}{q-1}\rangle)}
{\Gamma_{p}^3(\langle\frac{-lp^i}{q-1}\rangle)\Gamma_p(\langle\frac{3lp^i}{q-1}\rangle)}\notag\\
&~+\frac{1}{q-1}-6q\phi(-3).\notag
\end{align}
From Lemma \ref{lemma4} we deduce that
\begin{align}
D&=\frac{q^2\phi(-3a)}{q-1}\sum_{l=0,l\neq\frac{q-1}{2}}^{q-2}\pi^{(p-1)s}~~\overline{\omega}^l\left(-\frac{1}{a^3}\right)\notag\\
&~\times\prod_{i=0}^{r-1}\left\{\frac{\Gamma_{p}^3(\langle(\frac{1}{2}-\frac{l}{q-1})p^i\rangle)
\Gamma_p(\langle(\frac{1}{6}+\frac{l}{q-1})p^i\rangle)}
{\Gamma_{p}^3(\langle\frac{p^i}{2}\rangle)\Gamma_p(\langle\frac{p^i}{6}\rangle)}\right\}\notag\\
&~\times\prod_{i=0}^{r-1}\left\{\frac{\Gamma_p(\langle(\frac{1}{2}+\frac{l}{q-1})p^i\rangle)
\Gamma_p(\langle(\frac{5}{6}+\frac{l}{q-1})p^i\rangle)}
{\Gamma_p(\langle\frac{p^i}{2}\rangle)\Gamma_p(\langle\frac{5p^i}{6}\rangle)}\right\}\notag\\
&~+\frac{1}{q-1}-6q\phi(-3),\notag
\end{align}
 where $s=\sum_{i=0}^{r-1}
\{3\langle\frac{-2lp^i}{q-1}\rangle+\langle\frac{6lp^i}{q-1}\rangle-\langle\frac{3lp^i}{q-1}\rangle-
3\langle\frac{-lp^i}{q-1}\rangle\}$.
\begin{align}
D&=\frac{q^2\phi(-3a)}{q-1}\sum_{l=0,l\neq\frac{q-1}{2}}^{q-2}\pi^{(p-1)s}~~\overline{\omega}^l\left(-\frac{1}{a^3}\right)\notag\\
&~\times\underbrace{\prod_{i=0}^{r-1}\left\{\frac{\Gamma_p(\langle(\frac{1}{2}-\frac{l}{q-1})p^i\rangle)
\Gamma_p(\langle(\frac{1}{2}+\frac{l}{q-1})p^i\rangle)}
{\Gamma_p(\langle\frac{p^i}{2}\rangle)\Gamma_p(\langle\frac{p^i}{2}\rangle)}\right\}}\\
&\hspace{3.9cm}I_{l}\notag\\
&~\times\prod_{i=0}^{r-1}\left\{\frac{\Gamma_{p}^2(\langle(\frac{1}{2}-\frac{l}{q-1})p^i\rangle)
\Gamma_p(\langle(\frac{1}{6}+\frac{l}{q-1})p^i\rangle)\Gamma_p(\langle(\frac{5}{6}+\frac{l}{q-1})p^i\rangle)}
{\Gamma_{p}^2(\langle\frac{p^i}{2}\rangle)\Gamma_p(\langle\frac{5p^i}{6}\rangle)}\right\}\notag\\
&~+\frac{1}{q-1}-6q\phi(-3).\notag
\end{align}
For $l\neq \frac{q-1}{2}$, we have
\begin{align}\label{eq-55}
I_{l}&=\prod_{i=0}^{r-1}\frac{\Gamma_p(\langle(\frac{1}{2}-\frac{l}{q-1})p^i\rangle)
\Gamma_p(\langle(\frac{1}{2}+\frac{l}{q-1})p^i\rangle)}
{\Gamma_p(\langle\frac{p^i}{2}\rangle)\Gamma_p(\langle\frac{p^i}{2}\rangle)}\notag\\
&=\prod_{i=0}^{r-1}\frac{\Gamma_p(\langle(\frac{1}{2}-\frac{l}{q-1})p^i\rangle)
\Gamma_p(\langle(1-\frac{l}{q-1})p^i\rangle)\Gamma_p(\langle\frac{lp^i}{q-1}\rangle)
\Gamma_p(\langle(\frac{1}{2}+\frac{l}{q-1})p^i\rangle)}
{\Gamma_p(\langle\frac{p^i}{2}\rangle)\Gamma_p(\langle\frac{p^i}{2}\rangle)}\notag\\
&\times\frac{1}{\Gamma_p(\langle(1-\frac{l}{q-1})p^i\rangle)
\Gamma_p(\langle\frac{lp^i}{q-1}\rangle)}.
\end{align}
Applying Lemma \ref{lemma4} in equation \eqref{eq-55} we deduce that
\begin{align}\label{eq-56}
I_l&=\prod_{i=0}^{r-1}\frac{\Gamma_p(\langle\frac{-2lp^i}{q-1}\rangle)\Gamma_p(\langle\frac{2lp^i}{q-1}\rangle)}
{\Gamma_p(\langle(1-\frac{l}{q-1})p^i\rangle)
\Gamma_p(\langle\frac{lp^i}{q-1}\rangle)}.
\end{align}
From \cite[Eqn. 2.9]{mccarthy2} we have that for $0<l<q-1$,
$$\prod_{i=0}^{r-1}\Gamma_p(\langle(1-\frac{l}{q-1})p^i\rangle)
\Gamma_p(\langle\frac{lp^i}{q-1}\rangle)=(-1)^r\overline{\omega}^l(-1).$$
Putting this value in equation \eqref{eq-56}, and using Gross-Koblitz formula [Theorem \ref{thm4}], Lemma \ref{fusi3},
and the fact that $$\langle\frac{-2lp^i}{q-1}\rangle+\langle\frac{2lp^i}{q-1}\rangle=1,$$
we have
\begin{align}
I_l&=\frac{\pi^{(p-1)\sum_{i=0}^{r-1}\langle\frac{-2lp^i}{q-1}\rangle}\prod_{i=0}^{r-1}\Gamma_p\left(\langle\frac{-2lp^i}{q-1}
\rangle\right)
\pi^{(p-1)\sum_{i=0}^{r-1}\langle\frac{2lp^i}{q-1}\rangle}\prod_{i=0}^{r-1}\Gamma_p\left(\langle\frac{2lp^i}{q-1}\rangle\right)}
{(-1)^r\overline{\omega}^l(-1)\pi^{(p-1)\sum_{i=0}^{r-1}\{\langle\frac{-2lp^i}{q-1}\rangle+\langle\frac{2lp^i}{q-1}\rangle\}}}
\notag\\
&=\frac{G(\overline{\omega}^{~-2l})G(\overline{\omega}^{~2l})}{q\overline{\omega}^l(-1)}\notag\\
&=\frac{q~\overline{\omega}^{2l}(-1)}{q~\overline{\omega}^{l}(-1)}\notag\\
&=\overline{\omega}^{l}(-1).\notag
\end{align}
Using the above relation we obtain
\begin{align}\label{eq-52}
D&=\frac{q^2\phi(-3a)}{q-1}\sum_{l=0,l\neq\frac{q-1}{2}}^{q-2}\pi^{(p-1)s}~~
\overline{\omega}^l\left(\frac{1}{a^3}\right)\notag\\
&~\times\prod_{i=0}^{r-1}\left\{\frac{\Gamma_{p}^2(\langle(\frac{1}{2}-\frac{l}{q-1})p^i\rangle)
\Gamma_p(\langle(\frac{1}{6}+\frac{l}{q-1})p^i\rangle)\Gamma_p(\langle(\frac{5}{6}+\frac{l}{q-1})p^i\rangle)}
{\Gamma_{p}^2(\langle\frac{p^i}{2}\rangle)\Gamma_p(\langle\frac{5p^i}{6}\rangle)}\right\}\notag\\
&~+\frac{1}{q-1}-6q\phi(-3).
\end{align}
Now
\begin{align}
s&=\sum_{i=0}^{r-1}
\{3\langle\frac{-2lp^i}{q-1}\rangle+\langle\frac{6lp^i}{q-1}\rangle-\langle\frac{3lp^i}{q-1}\rangle-
3\langle\frac{-lp^i}{q-1}\rangle\}\notag\\
&=\sum_{i=0}^{r-1}
\{3(\frac{-2lp^i}{q-1})+(\frac{6lp^i}{q-1})-(\frac{3lp^i}{q-1})-
3(\frac{-lp^i}{q-1})\}\notag\\
&~+\sum_{i=0}^{r-1}
\{-3\lfloor\frac{-2lp^i}{q-1}\rfloor-\lfloor\frac{6lp^i}{q-1}\rfloor+\lfloor\frac{3lp^i}{q-1}\rfloor+
3\lfloor\frac{-lp^i}{q-1}\rfloor\}\notag\\
&=\sum_{i=0}^{r-1}
\{-3\lfloor\frac{-2lp^i}{q-1}\rfloor-\lfloor\frac{6lp^i}{q-1}\rfloor+\lfloor\frac{3lp^i}{q-1}\rfloor+
3\lfloor\frac{-lp^i}{q-1}\rfloor\},\notag
\end{align}
which is an integer.
Therefore equation \eqref{eq-52} becomes
\begin{align}
D&=\frac{q^2\phi(-3a)}{q-1}\sum_{l=0,l\neq\frac{q-1}{2}}^{q-2}(-p)^s~~
\overline{\omega}^l\left(\frac{1}{a^3}\right)\notag\\
&~\times\prod_{i=0}^{r-1}\left\{\frac{\Gamma_{p}^2(\langle(\frac{1}{2}-\frac{l}{q-1})p^i\rangle)
\Gamma_p(\langle(\frac{1}{6}+\frac{l}{q-1})p^i\rangle)\Gamma_p(\langle(\frac{5}{6}+\frac{l}{q-1})p^i\rangle)}
{\Gamma_{p}^2(\langle\frac{p^i}{2}\rangle)\Gamma_p(\langle\frac{5p^i}{6}\rangle)}\right\}\notag\\
&~+\frac{1}{q-1}-6q\phi(-3).
\end{align}
Lemma \ref{lemma5} gives
\begin{align}
D&=\frac{q^2\phi(-3a)}{q-1}\sum_{l=0,l\neq\frac{q-1}{2}}^{q-2}\overline{\omega}^l\left(\frac{1}{a^3}\right)
(-p)^{\sum_{i=0}^{r-1}\{-2\lfloor\langle\frac{p^i}{2}\rangle-\frac{lp^i}{q-1}\rfloor\}}\notag\\
&~\times(-p)^{\sum_{i=0}^{r-1}\{-\lfloor\langle\frac{-p^i}{6}\rangle+\frac{lp^i}{q-1}\rfloor
-\lfloor\langle\frac{-5p^i}{6}\rangle+\frac{lp^i}{q-1}\rfloor\}}\notag\\
&~\times\prod_{i=0}^{r-1}\left\{\frac{\Gamma_{p}^2(\langle(\frac{1}{2}-\frac{l}{q-1})p^i\rangle)
\Gamma_p(\langle(\frac{1}{6}+\frac{l}{q-1})p^i\rangle)\Gamma_p(\langle(\frac{5}{6}+\frac{l}{q-1})p^i\rangle)}
{\Gamma_{p}^2(\langle\frac{p^i}{2}\rangle)\Gamma_p(\langle\frac{5p^i}{6}\rangle)}\right\}\notag\\
&~+\frac{1}{q-1}-6q\phi(-3)\notag\\
&=\frac{q^2\phi(-3a)}{q-1}\sum_{l=0}^{q-2}\overline{\omega}^l\left(\frac{1}{a^3}\right)
(-p)^{\sum_{i=0}^{r-1}\{-2\lfloor\langle\frac{p^i}{2}\rangle-\frac{lp^i}{q-1}\rfloor\}}\notag\\
&~\times(-p)^{\sum_{i=0}^{r-1}\{-\lfloor\langle\frac{-p^i}{6}\rangle+\frac{lp^i}{q-1}\rfloor
-\lfloor\langle\frac{-5p^i}{6}\rangle+\frac{lp^i}{q-1}\rfloor\}}\notag\\
&~\times\prod_{i=0}^{r-1}\left\{\frac{\Gamma_{p}^2(\langle(\frac{1}{2}-\frac{l}{q-1})p^i\rangle)
\Gamma_p(\langle(\frac{1}{6}+\frac{l}{q-1})p^i\rangle)\Gamma_p(\langle(\frac{5}{6}+\frac{l}{q-1})p^i\rangle)}
{\Gamma_{p}^2(\langle\frac{p^i}{2}\rangle)\Gamma_p(\langle\frac{5p^i}{6}\rangle)}\right\}\notag\\
&~-\frac{q}{q-1}+\frac{1}{q-1}-6q\phi(-3)\notag\\
&=-1-6q\phi(-3)-q^2\phi(-3a)\cdot {_2}G_{2}\left[\begin{array}{cc}
                                               \frac{1}{2} & \frac{1}{2} \\
                                               \frac{1}{6} & \frac{5}{6}
                                             \end{array}|\frac{1}{a^3}
\right]_{q}.
\end{align}
Case 2: If $q\not\equiv 1 \pmod{3}$ then
$m=l$, $k=-3l$, and $n=l$, and then
\begin{align}
D&=\frac{1}{q-1}\sum_{l=0}^{q-2}G_{-l}G_{-l}G_{-l}G_{3l}T^{-3l}(-3a),\notag
\end{align}
which is same as the first term of the equation \eqref{eq-54}.
Thus we have
$$D=-1-q^2\phi(-3a)\cdot {_2}G_{2}\left[\begin{array}{cc}
                                               \frac{1}{2} & \frac{1}{2} \\
                                               \frac{1}{6} & \frac{5}{6}
                                             \end{array}|\frac{1}{a^3}
\right]_{q}.$$
Substituting the values of $A$, $B$, $C$ and $D$ in equation \eqref{eq-53} we obtain the desired result.
\end{proof}
\noindent \textbf{Proof of Theorem \ref{MT1}}:
Consider the elliptic curve $$E: y^2=x^3+mx+n,$$ where $m=-27d(d^3+8)$ and $n=27(d^6-20d^3-8)$. By the
following transformation
$x\rightarrow -\frac{36-9d^3+3dx-y}{6(9d^2+x)}$ and $y\rightarrow -\frac{36-9d^3+3dx+y}{6(9d^2+x)}$,
we obtain the equivalent form $C_d$. In the proof of Theorem 1.2, Barman and Kalita \cite{BK1} proved that
$$\#E(\mathbb{F}_q)+q=\#C_d(\mathbb{F}_q)+2+\phi(-3(8+92d^3+35d^6)).$$
For $d^3\neq 1$, from Theorem \ref{hessian2}, we have
\begin{align}
 a_q(E)&=q+1-\#E(\mathbb{F}_q)\notag\\
 &=2q-1-\#C_d(\mathbb{F}_q)-\phi(-3(8+92d^3+35d^6))\notag\\
 &=q-\alpha-\phi(-3(8+92d^3+35d^6))+q\phi(-3d)\cdot{_2}G_2\left[ \begin{array}{cc}
              \frac{1}{2}, & \frac{1}{2} \\
              \frac{1}{6}, & \frac{5}{6}
            \end{array}|\dfrac{1}{d^3}
 \right]_q,\notag
\end{align}
where
$\alpha=\left\{
           \begin{array}{ll}
             5-6\phi(-3), & \hbox{if~ $q\equiv 1\pmod{3}$;} \\
             1, & \hbox{if~ $q\not\equiv 1\pmod{3}$.}
           \end{array}
         \right.$\\
For $m, n \neq 0$ and $-\dfrac{27n^2}{4m^3}\neq 1$, we have $j(E)\neq 0, 1728$.
Now, applying Theorem \ref{mc} over $\mathbb{F}_q$, we complete the proof of the theorem.

\bibliographystyle{amsplain}

\begin{thebibliography}{10}
\bibitem{BK1}
R. Barman and G. Kalita, {\it Elliptic curves and special values of Gaussian hypergeometric series},
J. Number Theory 133 (2013), 3099--3111.

\bibitem{BK2}
R. Barman and G. Kalita, {\it Hypergeometric functions over $\mathbb{F}_q$ and traces of Frobenius for elliptic curves},
Proc. Amer. Math. Soc. 141 (2013), 3403--3410.

\bibitem{BS1}
R. Barman and N. Saikia, {\it $p$-Adic gamma function and the trace of Frobenius of elliptic curves},
J. Number Theory (to appear).

\bibitem{evans}
B. Berndt, R. Evans, and K. Williams, {\it Gauss and Jacobi Sums}, Canadian Mathematical Society Series of Monographs and Advanced Texts,
A Wiley-Interscience Publication, John Wiley \& Sons, Inc., New York, 1998.

\bibitem {Fuselier} J. Fuselier, \textit{Hypergeometric functions over $\mathbb{F}_p$ and relations to elliptic curve and modular forms},
Proc. Amer. Math. Soc. 138 (2010), 109--123.

\bibitem{greene}
J. Greene, {\it Hypergeometric functions over finite fields}, Trans. Amer. Math. Soc. 301 (1) (1987), 77--101.

\bibitem{gross}
B. H. Gross and N. Koblitz, {\it Gauss sum and the $p$-adic $\Gamma$-function}, Annals of Mathematics 109 (1079), 569--581.

\bibitem{ireland}
K. Ireland and M. Rosen, {\it A Classical Inroduction to Modern Number Theory}, Springer International Edition, Springer, 2005.

\bibitem {Lang} S. Lang, \textit{Cyclotomic Fields I and II},
Graduate Texts in Mathematics, vol. 121, Springer-Verlag, New York, 1990.

\bibitem {lennon} C. Lennon, \textit{Gaussian hypergeometric evaluations of traces of Frobenius for elliptic curves},
Proc. Amer. Math. Soc. 139 (2011), 1931--1938.

\bibitem {lennon2} C. Lennon, \textit{Trace formulas for Hecke operators, Gaussian hypergeometric functions, and the modularity of a threefold},
J. Number Theory 131 (12) (2011), 2320--2351.

\bibitem{mccarthy2}
D. McCarthy, {\it The trace of Frobenius of elliptic curves and the $p$-adic gamma function}, Pacific J. Math. 261 (1) (2013), 219--236.

\bibitem{mccarthy3}
D. McCarthy, {\it Extending Gaussian hypergeometric series to the $p$-adic setting}, Int. J. Number Theory 8 (7) (2012), 1581--1612.

\end{thebibliography}
%  Insert the bibliography data here.

\end{document}